\author{Johan Andersson\thanks{Email:johan.andersson@oru.se \, Address:Department of Mathematics, School of Science and Technology, {\"O}rebro University, {\"O}rebro, SE-701 82 Sweden. }}
 \date{}
\title{Joint universality
on the half plane of absolute convergence}
\theoremstyle{plain} 
\newtheorem{thm}{Theorem}  
\newtheorem{lem}{Lemma}
\newtheorem{cor}{Corollary} 
\theoremstyle{definition}
\newtheorem{defn}{Definition}
\newcommand{\C}{{\mathbb C}} 
\newcommand{\cH}{{\mathcal H}}
\newcommand{\R}{{\mathbb R}}
\newcommand{\Z}{{\mathbb Z}}
\newcommand{\norm}[1]{\left\lVert#1\right\rVert}
\newcommand{\abs}[1]{{\left| {#1} \right|}} \newcommand{\p}[1]{{\left(
     {#1} \right)}}
\renewcommand{\Re}{\operatorname{Re}}  
\begin{document}

\maketitle        
\begin{abstract} 
     We prove joint universality theorems 
     on the half plane of absolute convergence for general classes of Dirichlet series with an Euler-product, where in addition to vertical shifts we also allow scaling. This  generalizes our recent joint universality results for Dirichlet $L$-functions. In contrast to classical universality, we do not need that the Dirichlet series in question have an analytic continuation beyond their region of absolute convergence. Also we may allow weaker orthogonality conditions for pairs of Dirichlet series than in the previous joint universality results  of Lee-Nakamura-Pa\'{n}kowski. We take care to avoid using the Ramanujan conjecture in our proof and hence as a consequence of our universality theorem, we obtain stronger results on zeros of linear combinations of $L$-functions in the half plane of absolute convergence than previous results of Booker-Thorne and Righetti. For example as a consequence of our main universality result we have that certain linear combinations of Hecke $L$-series coming from  Maass wave forms have infinitely many zeros in any strip $1<\Re(s)<1+\delta$.
\end{abstract}

\maketitle
\section{Introduction and main results}
In a recent paper \cite{Andersson9}, 
 we introduced a new idea that allow us to prove a new type of universality theorem for a Dirichlet series with an Euler-product. In \cite{Andersson10} we further explored this idea and applied it to study universality results for the Hurwitz zeta-function. For rational parameters an important ingredient is a new type of joint universality result for Dirichlet $L$-functions \cite[Theorem 4]{Andersson10}, that also holds in the half plane of absolute convergence.  In this paper we will explore this idea further.
 \begin{defn} \label{def1}
   We say that a Dirichlet series 
   \begin{gather} \notag
  L(s)= \sum_{n=1}^\infty a(n) n^{-s}, \\ \intertext{is of standard type of order $(\lambda,\Lambda)$ if  it can be written as}
   L(s)= L_2(s) L_1(s)+L_3(s), \label{LLdef} \\ \intertext{where}  L_2(s)=1+\sum_{n=2}^\infty b(n) n^{-s}, \qquad L_3(s)=\sum_{n=1}^\infty d(n) n^{-s} \notag \\ \intertext{are absolutely convergent Dirichlet series for $\Re(s) \geq 1$,  and}
  L_1(s)=\sum_{n=1}^\infty c(n) n^{-s} =   \prod_{p} \sum_{k=0}^\infty c(p^k) p^{-ks} \notag 
\\ \intertext{has an Euler-product such that  }
  \notag \liminf_{N \to \infty} \sum_{N \leq p<N^{1+\xi}} \frac {|c(p)| } p \geq \lambda \log(1+\xi), \\ \intertext{and}
  \label{c}  \limsup_{N \to \infty} \sum_{N \leq p<N^{1+\xi}} \frac {|c(p)|^4 } p \leq \Lambda \log(1+\xi)
    \\ \intertext{for each $\xi>0$, and that} \label{c2}
  \sum_p \sum_{k=2}^\infty  \frac {|c(p^k)| k \log p } {p^k} <\infty, \qquad |c(p^k)|<p^k. 
\end{gather}
\end{defn}
So for example, by a weak version of the prime number theorem,  the Dirichlet $L$-functions are Dirichlet series of standard type and of order  $(1,1)$. We also define orthogonality of pairs of Dirichlet series of standard type
\begin{defn} \label{def2}
  Assume that the Dirichlet series
   \begin{gather*}
  A(s)= \sum_{n=1}^\infty a(n) n^{-s} 
  \qquad 
  \text{and} \qquad   B(s)= \sum_{n=1}^\infty b(n) n^{-s}
\\ \intertext{are of standard type. Then we say that the Dirichlet series $A(s)$ and $B(s)$ are orthogonal if for each $\xi>0$ then }
\lim_{N \to \infty} \abs{\sum_{ \substack{p \text{ prime} \\ N \leq p<N^{1+\xi}}} \frac{  a(p) \overline{b(p)}} p}=0, 
 \end{gather*}
 \end{defn}
Our definition of orthogonality is similar to  Righetti \cite[Definition 1.1]{Righetti}. It is easy to see that $A(s)$ and $B(s)$ are orthogonal if and only if $A_1(s)$ and $B_1(s)$ are orthogonal where $A_1(s),B_1(s)$ correspond to $L_1(s)$ in \eqref{LLdef}. Our joint universality theorem which proof follows in the same way as  \cite[Theorem 4]{Andersson10} is the following
     \begin{thm} \label{TH1} Let $L_1(s),\ldots,L_n(s)$ be pairwise orthogonal Dirichlet series of standard type. Let $K \subset  \{s \in \C : \Re(s)>0\}$ be a compact set with connected complement and let  $f_1,\ldots,f_n$ be continuous  functions on $K$ that are analytic in its interior.  Then for any $\varepsilon>0$ there exists some $C_0,\delta_0>0$, such that for any $|C_k|>C_0$ and  $0<\delta\leq \delta_0$ then
       \begin{gather*}
            \liminf_{T \to \infty} \frac 1 T \mathop{\rm meas} \left \{t \in [0,T]:\max_ {1 \leq k \leq n} \max_{s \in K} \abs{ L_k(1+it+\delta s)+C_k-f_k(s)}<\varepsilon \right \}>0. 
 \end{gather*} 
 \end{thm}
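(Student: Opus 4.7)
The plan is to adapt the strategy of \cite[Theorem~4]{Andersson10}, developed for Dirichlet $L$-functions, to our more general setting of orthogonal standard-type series. The core idea is to exploit the Euler product through logarithms, so that the almost-periodic dependence on $t$ becomes a linear combination of phases $p^{-it}$ that Kronecker's theorem lets us tune. First, I would use \eqref{LLdef} to separate the Euler product factor $L_{k,1}$ from the absolutely convergent auxiliary pieces $L_{k,2}$ and $L_{k,3}$. For $s = 1+it+\delta z$ with $z \in K$, both $L_{k,2}(s)$ and $L_{k,3}(s)$ differ from their values at $1+it$ only by $O(\delta)$ uniformly on $K$ and are bounded, so the approximation $L_k(1+it+\delta z) + C_k \approx f_k(z)$ reduces, after solving for $L_{k,1}$, to approximating
\[\widetilde{f}_k(z;t) \;=\; \frac{f_k(z)-C_k-L_{k,3}(1+it)}{L_{k,2}(1+it)}\]
by $L_{k,1}(1+it+\delta z)$ uniformly on $K$. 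Restricting to $t$ for which $L_{k,2}(1+it)$ is not too small (a set of positive density since $L_{k,2}$ has mean $1$) and taking $|C_k|$ large guarantees that $\widetilde{f}_k$ stays bounded away from $0$ on $K$, so we may legitimately pass to logarithms.

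Condition \eqref{c2} then yields the expansion
\[\log L_{k,1}(1+it+\delta z) = \sum_p c_k(p)\, p^{-1-it-\delta z} + H_k(1+it+\delta z),\]
where $H_k$ is an absolutely convergent Dirichlet series on $\Re(s) \geq 1$, hence small and essentially constant in $z$. Setting $g_k(z) = \log \widetilde{f}_k(z;t) - H_k(1+it+\delta z)$, the problem becomes: find phases $\phi_p$ on the unit circle, \emph{common to all $k$}, such that $\sum_{p \le P} c_k(p) p^{-1-\delta z} \phi_p \approx g_k(z)$ uniformly on $K$ for every $k$. Kronecker's equidistribution then yields a positive proportion of $t \in [0,T]$ with $p^{-it} \approx \phi_p$ for all $p \le P$, and after controlling the tail $p>P$ and exponentiating (a locally Lipschitz map since we stay in a bounded region) one recovers the required approximation of $L_{k,1}$, and thence of $L_k$.

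The crux, and the main obstacle, is this joint rearrangement step: it is a Pechersky--Bagchi type theorem in a Hilbert space of analytic functions on $K$. The lower bound in \eqref{c} guarantees that the family $\{c_k(p) p^{-1-\delta z}\}_p$ carries enough ``mass'' to be rearrangement-dense, while the $L^4$ upper bound in \eqref{c} supplies the second moment control needed both for the rearrangement theorem and for the tail estimate on $p>P$. The orthogonality of Definition~\ref{def2} is precisely what is required to \emph{decouple} the Hilbert space projections across different indices $k$, so that one common choice of phases can realize all $g_k$ simultaneously. The author's remark that the argument ``follows in the same way as \cite[Theorem~4]{Andersson10}'' suggests that the Hilbert space machinery is already in place for the character case, and the task here is largely to verify that the abstract hypotheses of Definitions~\ref{def1} and~\ref{def2} --- in particular the weaker $L^4$ control and the asymptotic (rather than exact) orthogonality --- suffice to drive the same argument, and to handle the auxiliary factors $L_{k,2}, L_{k,3}$ uniformly in $t$.
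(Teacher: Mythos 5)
Your high-level plan (split off the Euler product factor, pass to logarithms, use almost-periodicity of $p^{-it}$) matches the paper's starting point, but there are two substantive gaps that make the proposal diverge from a workable proof.

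First, your target $\widetilde{f}_k(z;t)=\bigl(f_k(z)-C_k-L_{k,3}(1+it)\bigr)/L_{k,2}(1+it)$ depends on $t$. The Kronecker step produces a fixed set of phases $\phi_p$ and then a positive-density set of $t$ with $p^{-it}\approx \phi_p$; but as $t$ ranges over that set, your target is moving. Restricting further to a positive-density set where $|L_{k,2}(1+it)|$ is bounded below does not resolve this: the two positive-density sets may barely intersect, and even where they do, the target still varies with $t$. The paper avoids the circularity by fixing a single $t_0$ with $\min_k|L_{2,k}(1+it_0)|>0$ and then invoking the \emph{hybrid} universality theorem (Theorem~\ref{TH3}) with $a_p=p^{-it_0}$ for small primes $p\le N$. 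Pinning the small primes forces $L_{2,k}(1+it+\delta s)$ and $L_{3,k}(1+it+\delta s)$ to be uniformly close to their values at $t_0$, so the targets $F_k(s)=f_k(s)-L_{3,k}(1+it_0)$ become genuinely $t$-independent. This is why Theorem~\ref{TH3} is stated as a hybrid result — that feature is not optional, it is the hinge of the deduction.

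Second, you describe the joint rearrangement step as ``a Pechersky--Bagchi type theorem in a Hilbert space of analytic functions on $K$.'' The paper does not use a function-space rearrangement theorem at all. Instead, Lemma~\ref{LA3} writes each target as a Laplace transform $\int_A^B g_k(x)e^{-sx}\,dx$, this integral is discretized into a Riemann sum, and each Riemann-sum cell is matched to a block of primes $P_2^m\le p<P_2^{m+1}$; on each block one applies the \emph{finite-dimensional} Hilbert-space Lemma~\ref{LE12} (in $\cH=\C^n$), with the $L^4$ bound in \eqref{c} controlling the block norms (Lemma~\ref{LE5}) and the lower bound supplying enough mass per block, and orthogonality decoupling the $n$ coordinates. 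The jointness across $k$ is thus handled in $\C^n$, not in a space of analytic functions, and this also explains the explicit bound \eqref{xgkx} on $|xg_k(x)|$ and why $|C_k|$ must be large (to make $G_k/C_k$ small enough to meet that bound). Your sketch correctly identifies what each hypothesis in Definitions~\ref{def1}--\ref{def2} ought to buy, but without the Laplace discretization and the finite-dimensional lemma the rearrangement step you gesture at is unproven, and without the hybrid mechanism the reduction to the Euler-product factor does not close.
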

 Recently  Lee-Nakamura-Pa\'{n}kowski \cite{Lee}  proved a version of a conjecture of Steuding \cite{Steuding} about joint universality of functions of the Selberg class satisfying certain properties.  Although they have similar orthogonality conditions, it should be noted that our conditions on orthogonality are weaker than their orthogonality conjectures \cite[eq (4).]{Lee}. In particular by the prime number theorem
 \begin{gather*}
  \sum_{N \leq p<M} p^{ia-1} \approx \int_{N}^{M}
   \frac {x^{ia-1} dx} {\log x} = \left[\begin{matrix} y=\log x \\ x^{-1} dx = dy  \end{matrix}\right] = 
   \int_{\log N}^{ \log M} \frac {e^{iay}} y dy \ll
   \frac 1 {|a| \log N},
   \end{gather*}
 we have that for any real  $a \neq 0$ that
 \begin{gather*}
   \lim_{N \to \infty} \abs{\sum_{N \leq p<N^{1+\xi}} p^{ia-1}} =0,
 \end{gather*}
 and we have that  the Dirichlet series $\zeta(s+ia_1)$ and $\zeta(s+ia_2)$ are orthogonal for $a_1-a_2 =a \in \R \setminus\{0\}$.
    We may thus use the functions $L_k(s)=\zeta(s+ia_k)$ in Theorem \ref{TH1} and we obtain the following corollary for the Riemann zeta-function: 
 \begin{cor} \label{cr1}
     Let $a_1,\ldots,a_k$ be distinct real numbers. Let $K \subset  \{s \in \C : \Re(s)>0\}$ be a compact set with connected complement and let  $f_1,\ldots,f_n$ be continuous  functions on $K$ that are analytic in its interior.  Then for any $\varepsilon>0$ there exists some $C_0,\delta_0>0$, such that for any $|C_k|>C_0$ and  $0<\delta\leq \delta_0$ then
       \begin{gather*}
            \liminf_{T \to \infty} \frac 1 T \mathop{\rm meas} \left \{t \in [0,T]:\max_ {1 \leq k \leq n} \max_{s \in K} \abs{ \zeta(1+ia_k+it+\delta s)+C_k-f_k(s)}<\varepsilon \right \}>0. 
\end{gather*}
 \end{cor}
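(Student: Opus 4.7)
The plan is to apply Theorem~\ref{TH1} directly to the family $L_k(s) = \zeta(s+ia_k)$, $k=1,\ldots,n$. Once the two hypotheses are verified---each $L_k(s)$ is of standard type in the sense of Definition~\ref{def1}, and the family is pairwise orthogonal in the sense of Definition~\ref{def2}---the conclusion of the corollary is literally the conclusion of Theorem~\ref{TH1} with this choice of $L_k$.

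For the standard type condition I would take the trivial decomposition in \eqref{LLdef} with $L_2(s) = 1$, $L_3(s) = 0$, and
\[
L_1(s) = \zeta(s+ia_k) = \prod_p \left(1 - p^{-s-ia_k}\right)^{-1}.
\]
The Euler coefficients are $c(p^m) = p^{-ima_k}$, all of modulus one, so the two conditions on $|c(p)|$ and $|c(p)|^4$ in Definition~\ref{def1} collapse to the single asymptotic
\[
\lim_{N \to \infty} \sum_{N \leq p < N^{1+\xi}} \frac{1}{p} = \log(1+\xi),
\]
which is a weak form of the prime number theorem (Mertens' theorem) and gives $\lambda = \Lambda = 1$. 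Condition \eqref{c2} reduces to the elementary convergence of $\sum_p \sum_{k \geq 2} k(\log p)/p^k$ together with the trivial bound $1 < p^k$.

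For orthogonality, the key estimate has in fact already been carried out in the paragraph immediately preceding the corollary. Given indices $j \neq k$, set $a = a_k - a_j$, which is nonzero by the distinctness assumption. The Dirichlet coefficients of $\zeta(s+ia_j)$ and $\zeta(s+ia_k)$ at primes are $p^{-ia_j}$ and $p^{-ia_k}$ respectively, so the expression appearing in Definition~\ref{def2} equals
\[
\sum_{N \leq p < N^{1+\xi}} \frac{p^{-ia_j}\,\overline{p^{-ia_k}}}{p} = \sum_{N \leq p < N^{1+\xi}} p^{ia-1},
\]
and the partial summation plus prime number theorem argument displayed in the excerpt shows that this is $O(1/(|a|\log N))$, which tends to zero as required.

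Having verified both hypotheses, Theorem~\ref{TH1} yields the corollary at once. There is essentially no obstacle specific to the corollary: the only nontrivial input is the $p^{ia-1}$ partial-sum estimate underpinning orthogonality, and this has already been written out immediately before the corollary is stated. All the real analytic work is absorbed into Theorem~\ref{TH1}.
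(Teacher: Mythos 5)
Your proof is correct and follows exactly the paper's approach: the paper verifies that $\zeta(s+ia_k)$ are pairwise orthogonal via the prime number theorem estimate on $\sum_{N\le p<N^{1+\xi}}p^{ia-1}$ (displayed just before the corollary) and that each is of standard type of order $(1,1)$ (remarked after Definition~\ref{def1}), then applies Theorem~\ref{TH1}. You spell out the verification of the standard-type conditions in slightly more detail than the paper does, but the argument is the same.
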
 
 It is  clear that $\zeta(s+ia_1)$ and $\zeta(s+ia_2)$ are not jointly  universal in the classical sense, since we may choose the compact set $K$ sufficiently large so that both $s+ia_1 \in K$ and $s+ia_2 \in K$. 
 
 As a consequence of Theorem 1, we obtain in the same manner as we proved that \cite[Theorem 4]{Andersson10} implies \cite[Theorem 1]{Andersson10} the following theorem.
 \begin{thm}
 \label{TH2} Let $L(s)=\sum_{k=1}^n a_k L_k(s)$ be a linear combination of pairwise orthogonal Dirichlet series of standard type such that at least two of its coefficients $a_k$ are non-zero. Let $K \subset  \{s \in \C : \Re(s)>0\}$ be a compact set with connected complement and let $f$
 be a continuous  functions on $K$ that is analytic in its interior.  Then for any $\varepsilon>0$ there exists some $\delta_0>0$, such that for any  $0<\delta\leq \delta_0$ then
       \begin{gather*}
            \liminf_{T \to \infty} \frac 1 T \mathop{\rm meas} \left \{t \in [0,T]: \max_{s \in K} \abs{ L(1+it+\delta s)-f(s)}<\varepsilon \right \}>0. 
 \end{gather*} 
 \end{thm}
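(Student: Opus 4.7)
The plan is to deduce Theorem~\ref{TH2} from Theorem~\ref{TH1} by exploiting the freedom to choose the large shift constants $C_k$. The basic idea is that if Theorem~\ref{TH1} lets us simultaneously approximate $L_k(1+it+\delta s) \approx f_k(s)-C_k$ for each $k$, then taking the linear combination yields $L(1+it+\delta s) \approx \sum_{k=1}^{n} a_k f_k(s) - \sum_{k=1}^{n} a_k C_k$, and it remains to choose the $f_k$'s and $C_k$'s so that the right-hand side equals $f(s)$ exactly.

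Without loss of generality suppose $a_1 \neq 0$ and $a_2 \neq 0$. I would set $f_1(s) := f(s)/a_1$ and $f_k(s) \equiv 0$ for $2 \leq k \leq n$, so that $\sum a_k f_k = f$. Put $\varepsilon' := \varepsilon/\sum_{k=1}^n |a_k|$ and apply Theorem~\ref{TH1} to the orthogonal family $L_1,\dots,L_n$ with the compact set $K$, the targets $f_1,\dots,f_n$, and precision $\varepsilon'$; this yields constants $C_0,\delta_0>0$.

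The key step is to select specific shift constants $C_1,\dots,C_n$ with $|C_k|>C_0$ that additionally satisfy the linear constraint $\sum_{k=1}^{n} a_k C_k = 0$. To do so, set $C_k := C_0+1$ for $k\geq 3$, put $S := (C_0+1)\sum_{k\geq 3} a_k$, pick a real number $M$ large enough that $M > C_0$ and $|S+a_1 M|/|a_2| > C_0$, and define $C_1 := M$ and $C_2 := -(S+a_1 M)/a_2$. With these shifts fixed, Theorem~\ref{TH1} produces, for every $0<\delta\leq\delta_0$, a set of $t \in [0,T]$ of positive lower density on which
\begin{gather*}
\max_{1\leq k\leq n}\, \max_{s\in K}\, \abs{L_k(1+it+\delta s)+C_k-f_k(s)} < \varepsilon'.
\end{gather*}
On such $t$, using that $\sum a_k C_k = 0$ and $\sum a_k f_k = f$, the triangle inequality yields
\begin{gather*}
\abs{L(1+it+\delta s)-f(s)} = \abs{\sum_{k=1}^{n} a_k\bigl(L_k(1+it+\delta s)+C_k-f_k(s)\bigr)} \leq \varepsilon'\sum_{k=1}^{n}|a_k| = \varepsilon,
\end{gather*}
which proves Theorem~\ref{TH2} with the same $\delta_0$.

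There is really no serious obstacle; the argument is essentially linear algebra. The hypothesis that at least two of the $a_k$'s are nonzero enters only in solving the constraint $\sum a_k C_k = 0$ subject to $|C_k|>C_0$; with only one nonzero coefficient, this system would be inconsistent, reflecting the trivial obstruction that a single $L_k$ plus a large constant shift cannot approximate a general analytic function $f$.
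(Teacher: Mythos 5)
Your proof is correct and takes essentially the same route as the paper: both reduce to Theorem~\ref{TH1} by choosing target functions $f_k$ with $\sum_k a_k f_k = f$ and shift constants $C_k$ with $\sum_k a_k C_k = 0$, the hypothesis that at least two $a_k$ are nonzero being exactly what allows the latter with all $|C_k|>C_0$. The paper uses $f_k = \overline{a_k}f/a$ and $C_k = Cb_k$ for a fixed nonzero solution of $\sum a_k b_k = 0$, while you use $f_1 = f/a_1$, $f_{k\geq 2}\equiv 0$, and an explicit construction of the $C_k$; the difference is purely cosmetic.
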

 \begin{proof}
    Define $a:=|a_1|^2+\cdots + |a_n|^2$  and 
    let $b_k$ be non-zero constants such that $b_1 a_1+\cdots+b_n a_n=0$. By Theorem \ref{TH1} the approximation
    \begin{gather*} 
    \max_ {1 \leq k \leq n} \max_{s \in K} \abs{ L_k(1+it+\delta s)+C b_k-\frac{\overline{a_k} f(s)} a}<\frac{\varepsilon} {a}
    \end{gather*}
        holds with positive lower density in $t$ for some sufficently large $C$. An application of the triangle inequality concludes the proof.
         \end{proof}
 By the observation that $\zeta(s)$ and $\zeta(s+ia)$ are  orthogonal if $a \neq 0$ is real, we obtain as a consequence of Theorem  \ref{TH2}  that
  \begin{cor} \label{cr2}
   Let $A \neq 0,B \neq 0$ be complex numbers and $a \neq 0$ be a real number. Let $K \subset  \{s \in \C : \Re(s)>0\}$ be a compact set with connected complement and let  $f$ be a continuous  function on $K$ that is analytic in its interior.  Then for any $\varepsilon>0$ there exists some $\delta_0>0$, such that  for any  $0<\delta\leq \delta_0$ then
       \begin{gather*}
            \liminf_{T \to \infty} \frac 1 T \mathop{\rm meas} \left \{t \in [0,T]: \max_{s \in K} \abs{ A\zeta(1+ia+it+\delta s)+  B\zeta(1+it+\delta s)-f(s)}<\varepsilon \right \}>0. 
\end{gather*}
\end{cor}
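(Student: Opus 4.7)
The plan is to recognize Corollary \ref{cr2} as a direct application of Theorem \ref{TH2} to the pair $L_1(s) = \zeta(s+ia)$ and $L_2(s) = \zeta(s)$ with coefficients $a_1 = A$ and $a_2 = B$. Since $A$ and $B$ are both nonzero, the hypothesis that at least two coefficients of the linear combination are nonzero is satisfied.

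First I would verify that both $\zeta(s)$ and $\zeta(s+ia)$ are Dirichlet series of standard type in the sense of Definition \ref{def1}. For $\zeta(s)$ itself this is the remark after Definition \ref{def1} (it is a Dirichlet $L$-function of order $(1,1)$, using a weak form of the prime number theorem to confirm $\sum_{N \leq p < N^{1+\xi}} 1/p \sim \log(1+\xi)$). For $\zeta(s+ia)$, the Euler factor coefficients become $c(p) = p^{-ia}$, so $|c(p)| = 1$ and $|c(p)|^4 = 1$, giving exactly the same bounds. The conditions \eqref{c2} on higher prime powers follow immediately from those for $\zeta(s)$ since shifting by $ia$ does not change absolute values.

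Next I would check the pairwise orthogonality condition of Definition \ref{def2}. This is precisely the computation already carried out in the text preceding Corollary \ref{cr1}: using the prime number theorem in the form $\sum_{N \leq p < N^{1+\xi}} p^{ia-1}$ can be replaced by the integral $\int_{\log N}^{(1+\xi)\log N} e^{iay}/y\, dy$, which is $O(1/(|a|\log N))$ by partial summation, hence tends to $0$ as $N \to \infty$ whenever $a \neq 0$. Applied to $L_1(p) = p^{-ia}$ and $\overline{L_2(p)} = 1$, this gives the required orthogonality of $\zeta(s+ia)$ and $\zeta(s)$.

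With both hypotheses verified, Theorem \ref{TH2} applied to $L(s) = A\zeta(s+ia) + B\zeta(s)$ yields exactly the conclusion of the corollary. There is no genuine obstacle here since the proof is a direct specialization; the only thing one has to be careful about is matching the definitions (standard type and orthogonality) for the shifted zeta function, but both reductions are essentially contained in the discussion the author has already provided in the text.
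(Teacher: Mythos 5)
Your proposal is correct and follows exactly the route the paper takes: the paper states Corollary~\ref{cr2} as a consequence of Theorem~\ref{TH2} combined with the observation that $\zeta(s)$ and $\zeta(s+ia)$ are orthogonal for real $a\neq 0$, and your verification of the standard-type and orthogonality hypotheses for the pair $\zeta(s+ia)$, $\zeta(s)$ matches the discussion preceding Corollary~\ref{cr1}.
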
 
 In Corollary \ref{cr1} and Corollary \ref{cr2} it is not actually neccessary to assume that $K$ lies in the half plane $\Re(s)>0$. This is because the Riemann zeta-function has a nice analytic continuation beyond the line $\Re(s)=1$. The reason why we include this condition is that in this case  we may state them as corollaries of Theorem \ref{TH1} and Theorem \ref{TH2}  which also holds for more general Dirichlet series. Another consequence of Theorem \ref{TH2} is that it gives a new proof  for the infinitude of zeros of linear combinations of Dirichlet series of Righetti \cite{Righetti0,Righetti} and Booker-Thorne \cite{BookerThorne} for $\Re(s)>1$. Since we do not assume the Ramanujan conjecture and it is known that \eqref{c} and \eqref{c2} holds for Maass wave-form $L$-functions, the same holds also for linear combinations of Maass wave-form $L$-functions\footnote{Booker-Thorne remarked that their method could also be extended to cover this case, see \cite[remark (3)]{BookerThorne}}. We remark that although in contrast to the result of Lee-Nakamura-Pa\'{n}kowski \cite{Lee}  we need to add constants in Theorem \ref{TH1}, in some of its important applications like Theorem \ref{TH2} this is no longer neccesary. Like in \cite{Andersson9} we may also state a version of Theorem \ref{TH1} when this is no longer neccesary. Then however we need additional restraints on the functions $f_k$. We will find it convenient to assume the somewhat stricter condition that the Dirichlet series in question have an Euler product, but in addition we will also prove a version of so called {\em hybrid universality theorem}. 
  \begin{thm} \label{TH3}  Let $|a_p|=1$ for primes $2 \leq p \leq N$, let $L_1(s),\ldots,L_n(s)$ be pairwise orthogonal Dirichlet series of standard type,  where  $L_k(s)$ are of order $(\lambda,\Lambda)$ and have Euler products\footnote{This means that $L_2(s)=1$ and $L_3(s)=0$ in \eqref{LLdef}.}. Let $K \subset  \{s \in \C   :  \Re(s)>0\}$ be a compact set and let  $f_1,\ldots,f_n$ be continuous  functions on $K$ that are analytic in its interior, where
  \begin{gather*}
   f_k(s)= C_k+\int_0^{\infty} g_k(x)e^{-sx} dx,  \qquad (s \in K) 
 \end{gather*}
 are Laplace-transforms of the functions $g_k(x)$  so that
 \begin{gather}
          \abs{x g_k(x)} \leq \frac 1 8 \sqrt{\frac{\lambda^{3}}{n^3 \Lambda}  },  \qquad (k=1,\ldots,n \,; \, x \geq 0). \label{xgkx} 
       \end{gather}
    Then for any  $\varepsilon>0$ there exist some  $\delta_0>0$  such that for each $0<\delta\leq \delta_0$  then
       \begin{gather*}
            \liminf_{T \to \infty} \frac 1 T \mathop{\rm meas} \left \{t \in [0,T] : \begin{matrix*}[l] \max_{2 \leq p \leq N} \abs{p^{it}-a_p}<\varepsilon, \\ \max_ {1 \leq k \leq n} \max_{s \in K} \abs{\log L_k(1+it+\delta s)-f_k(s) } <\varepsilon \end{matrix*} \right \}>0. 
 \end{gather*} \end{thm}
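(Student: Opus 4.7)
The strategy combines the joint approximation scheme from the proof of Theorem \ref{TH1} (which follows \cite[Theorem 4]{Andersson10}) with standard hybrid-universality technology: a simultaneous Diophantine approximation handling the prescribed phases $a_p$ for the small primes, together with the Laplace-transform structure of the $f_k$'s, which lets us dispense with the free constants $C_k$ used in Theorem \ref{TH1}.

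First I would take logarithms in the Euler products and split off higher prime powers,
\[ \log L_k(1+it+\delta s) = \sum_p \frac{c_k(p)}{p^{1+it+\delta s}} + R_k(t,s), \]
where \eqref{c2} gives that $R_k(t,s)$ is absolutely convergent and uniformly bounded on $\R\times K$, with $s$-variation of order $O(\delta)$. By the linear independence over $\Q$ of $\{\log p : p \le N\}$, Weyl equidistribution yields a subset $\mathcal{T}_0(T) \subset [0,T]$ of positive lower density $d_N>0$ on which $|p^{it}-a_p|<\varepsilon/2$ for every $p \le N$; on this subset the small-prime partial sum $\sum_{p\le N} c_k(p)p^{-1-it-\delta s}$ is uniformly close to the fixed analytic function $\sigma_k(s) := \sum_{p\le N} c_k(p)\overline{a_p}\, p^{-1-\delta s}$, so the problem reduces to approximating, for $t\in\mathcal{T}_0(T)$,
\[ \sum_{p > N}\frac{c_k(p)}{p^{1+it+\delta s}} \;\approx\; f_k(s) - C_k - \sigma_k(s) - R_k(t,s), \qquad 1\le k \le n, \]
uniformly for $s \in K$.

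The bulk of the work, following the template of Theorem \ref{TH1}, is to realize the right-hand side as a generalized Riemann sum over primes via the substitution $x=\delta\log p$. Concretely, one partitions the primes $p>N$ into dyadic ranges $[(1+\xi)^j,(1+\xi)^{j+1})$ with small $\xi>0$, and within each range seeks unit-modulus phases $u_p$ (playing the role of $p^{-it}$) so that the block sum $\sum c_k(p)u_p/p$ matches the local average value of $xg_k(x)$ on the corresponding $x$-interval. The lower bound on $\sum |c_k(p)|/p$ from \eqref{c} provides the available energy in each block, the $L^4$ bound controls the second-moment error, and pairwise orthogonality (Definition \ref{def2}) decouples the $n$ subproblems so that they can be solved jointly; the explicit constant $\tfrac{1}{8}\sqrt{\lambda^3/(n^3\Lambda)}$ in \eqref{xgkx} is precisely the energy budget that makes this feasible for $n$ functions simultaneously.

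Finally, a second application of Weyl equidistribution, now to the joint torus $\{(p^{it})_{p\le P}\}$ for a large truncation $P=P(\varepsilon,\delta)$ (chosen so that the tail $\sum_{p>P}|c_k(p)|^2/p^2$ is negligible by \eqref{c}), produces an actual $t$ with $p^{it}$ close to the target phases $u_p$ on $N<p\le P$; intersecting with the small-prime constraint from $\mathcal{T}_0(T)$ yields the required positive-density subset. The main obstacle is the energy-balanced construction of the unit-modulus phases $u_p$: building $n$ Laplace-transform approximations simultaneously using only the weak pairwise orthogonality of Definition \ref{def2} (which is, by design, substantially weaker than the hypothesis of \cite[(4)]{Lee}) — this is where the precise constant in \eqref{xgkx} is indispensable.
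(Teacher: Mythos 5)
Your outline is structurally the same as the paper's: decompose $\log L_k$ into a prime sum plus a higher--prime--power remainder, pin down the small primes via Kronecker/Weyl, build the target Laplace transform in dyadic prime blocks using a Hilbert--space rearrangement inequality with the energy budget coming from \eqref{xgkx} and the decoupling coming from pairwise orthogonality (this is the content of the paper's Lemmas~\ref{LE5}, \ref{lem5}, \ref{lem6}, culminating in Lemma~\ref{Le2}), and close with a second Kronecker step. However, there is a genuine gap in your handling of the remainder $R_k(t,s)$. As you set it up, you ``reduce the problem to approximating'' the quantity $f_k(s)-C_k-\sigma_k(s)-R_k(t,s)$ by a sum over large primes, but the right--hand side depends on $t$, while your unit--modulus phases $u_p$ are to be constructed \emph{before} $t$ is selected by the final equidistribution step. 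This is circular as stated: the target of the block construction moves with the very $t$ you later choose. Saying $R_k$ is ``uniformly bounded with $s$-variation $O(\delta)$'' does not remove the $t$-dependence, and \eqref{c2} only guarantees absolute convergence, not that the target is fixed.

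The paper resolves this by inserting an intermediate reference character: it extends $a_p$ to a fixed $\omega_1$ ($\omega_1(p)=a_p$ for $p\le N$, $\omega_1(p)=1$ otherwise), introduces the absolutely convergent Euler factor correction $E_k(s,\omega)$ so that $\log L_k = \sum_p \omega(p)c_k(p)p^{-s}-\log E_k(s,\omega)$, chooses $N_0$ so that $\log E_k(1,\omega)$ is determined (up to $\varepsilon/4$) by the values of $\omega$ on $j\le N_0$ alone (display~\eqref{tra2aa}), and then applies Lemma~\ref{Le2} with the prescribed phases extended to all $p\le N_0$ and with target $f_k+D_k$, $D_k:=C_k+\log E_k(1,\omega_1)$ a \emph{fixed} constant. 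Only after the multiplicative function $\omega_2$ is in hand does the Kronecker step produce $t$ with $p^{-it}\approx\omega_2(p)$ for $p<N_1$, which simultaneously forces $\sum_p c_k(p)p^{-1-it-\delta s}\approx \sum_p\omega_2(p)c_k(p)p^{-1-\delta s}$ and $\log E_k(1+it+\delta s)\approx\log E_k(1,\omega_1)$. Your proposal also folds the constant part into the block construction without explanation; in the paper this is a separate step (Lemma~\ref{lem6}), where repeated blocks of bounded energy are stacked to realize an arbitrary constant while respecting the prescribed small--prime phases. If you fix these two points --- freeze the higher--power remainder by working relative to a reference $\omega_1$ before constructing the large--prime phases, and handle the constant via iterated blocks --- your argument lines up with the paper's.
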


 \section{Proof of Theorem \ref{TH3}}

  Like in \cite{Andersson9}, Theorem \ref{TH1} follows from Theorem \ref{TH3} (see section \ref{sec3}), so we will start by proving Theorem \ref{TH3}.
 We need the following fundamental  lemma (for its proof, see section \ref{sec4}) which replaces the Pechersky rearrangement theorem in classical universality proofs.
 \begin{lem}
 \label{Le2}
 For any $N,\varepsilon>0$, numbers $|a_p|=1$ for primes $2\leq p \leq N$, compact set $K \subset \C$, coefficients $c_1,\ldots,c_k$   of pairwise orthogonal Dirichlet series of standard type and with Euler products, and functions  $f_1,\ldots,f_n$,  satisfying the conditions of Theorem \ref{TH3}  there exists some $\delta_0>0$ such that for any $0<\delta \leq \delta_0$ there exists some 
  completely multiplicative unimodular function $\omega:\Z^+ \to \C$ such that $\omega(p)=a_p$ for the primes $p \leq N$ and such that 
  \begin{gather*}
     \max_{k=1,\ldots,n} \max_{s \in K} \abs{ 
\sum_{p} \frac{\omega(p) c_k(p)} {p^{1+\delta s}}-f_k(s) }<\varepsilon. 
  \end{gather*}
 \end{lem}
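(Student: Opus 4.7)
The proof proceeds in two stages. First, a geometric discretisation reduces the approximation of the Laplace transforms $f_k$ on $K$ to a block-wise approximation by sums over primes. Second, an orthogonality-driven phase selection uses Definition~\ref{def1} and Definition~\ref{def2} to realise the block-wise targets. Fix a small $\xi>0$ and partition the primes $p>N$ into geometric blocks $I_j=[N_j,N_{j+1})$ with $N_{j+1}=N_j^{1+\xi}$. Set $x_j=\delta\log N_j$, so that $\delta\log p\in[x_j,x_{j+1})$ and $x_{j+1}-x_j=\xi x_j(1+O(\xi))$ for $p\in I_j$. On $K\subset\{\Re s>0\}$ the factor $p^{-\delta s}=e^{-\delta s\log p}$ equals $e^{-sx_j}$ up to an error of order $\xi$, uniformly in $s\in K$ and $p\in I_j$. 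A Riemann-sum approximation gives
\begin{equation*}
\int_0^\infty g_k(x)e^{-sx}\,dx \;=\; \sum_{j\geq 0}g_k(x_j)(x_{j+1}-x_j)e^{-sx_j} \;+\; O(\xi),
\end{equation*}
uniformly on $K$. The constant $C_k$ and the fixed contribution $\sum_{p\leq N}a_pc_k(p)/p^{1+\delta s}$ from the primes $p\leq N$ (essentially constant on $K$ for $\delta$ small) can be absorbed into the target assigned to an enlarged initial block on which $x_j$ is close to $0$.

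\textbf{Block-wise selection.} After this reduction it suffices to find unimodular $\omega(p)$ for each block $I_j$ such that
\begin{equation*}
\sum_{p\in I_j}\frac{\omega(p)c_k(p)}{p} \;\approx\; T_j^{(k)} \;:=\; \xi x_j g_k(x_j), \qquad (k=1,\ldots,n),
\end{equation*}
with analogous constant targets for the initial block. By \eqref{xgkx} these satisfy $|T_j^{(k)}|\leq\tfrac{\xi}{8}\sqrt{\lambda^3/(n^3\Lambda)}(1+o(1))$, so every block target lies in a ball of radius $O(\xi\sqrt{\lambda^3/(n^3\Lambda)})$ in $\C^n$. I would then split each $I_j$ into $n$ sub-blocks $I_j^{(1)},\ldots,I_j^{(n)}$ of approximately equal $L^1$-mass, so that $\sum_{p\in I_j^{(k)}}|c_k(p)|/p\geq(\lambda/n)\log(1+\xi)(1+o(1))$ by the $L^1$ lower bound in Definition~\ref{def1} and a greedy partition. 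Using sub-block $I_j^{(k)}$ to adjust the $k$-th coordinate, the attainable values of $\sum_{p\in I_j^{(k)}}\omega(p)c_k(p)/p$ fill the closed disk of radius $\sum_{p\in I_j^{(k)}}|c_k(p)|/p$, which comfortably covers the allowed target magnitude. The cross-coordinate leakage $\sum_{p\in I_j^{(k)}}\omega(p)c_\ell(p)/p$ for $\ell\neq k$ is controlled by the orthogonality of Definition~\ref{def2} together with a Cauchy--Schwarz estimate using \eqref{c}; the numerical factor $\tfrac18$ in \eqref{xgkx} is calibrated to leave enough room to correct the leakage simultaneously for all $k$.

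\textbf{Main obstacle.} The analytic ingredients — the Riemann-sum error, the absorption of $C_k$ and of the small-prime contribution into the initial block, the uniform $O(\xi)$ estimates, and the summability of the tail via $|e^{-sx_j}|\leq e^{-x_j\Re s}$ on $K$ — are routine once the geometric partition is in place. The crux is the block-wise quantitative statement: that the image of the unimodular-phase map
\begin{equation*}
(\omega(p))_{p\in I_j}\longmapsto\Bigl(\sum_{p\in I_j}\frac{\omega(p)c_k(p)}{p}\Bigr)_{k=1}^n\in\C^n
\end{equation*}
contains a ball of radius $c\sqrt{\lambda^3/(n^3\Lambda)}\log(1+\xi)$ about the origin, uniformly as $j\to\infty$. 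The specific balance $\lambda^{3/2}/(n^{3/2}\Lambda^{1/2})$ arises from trading the single-coordinate $L^1$-radius of order $\lambda/n$ against the $L^2$ norm of the leakage, which is controlled by the $L^4$-bound $\Lambda$ in \eqref{c} and Cauchy--Schwarz; making this quantitative, so that an iterative correction actually converges within a single block, is where the real work lies.
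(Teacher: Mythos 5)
Your high-level plan (geometric blocks $I_j=[N_j,N_j^{1+\xi})$, Riemann-sum discretisation of the Laplace transforms, block-wise hitting of targets $T_j^{(k)}\approx\xi x_j g_k(x_j)$) matches the skeleton of the paper's argument, and you correctly single out the block-wise hitting problem as the crux. But that crux is exactly where your proposal has a genuine gap, and the route you sketch does not obviously close it.

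The difficulty is this. To hit the $k$-th target from sub-block $I_j^{(k)}$ you would naturally take $\omega(p)=e^{i\theta}\overline{c_k(p)}/|c_k(p)|$, which indeed fills a disk of radius $\sum_{p\in I_j^{(k)}}|c_k(p)|/p$ in coordinate $k$. But the resulting leakage into coordinate $\ell\neq k$ is $e^{i\theta}\sum_p \overline{c_k(p)}c_\ell(p)/(|c_k(p)|\,p)$, and the denominator $|c_k(p)|$ destroys the structure needed to invoke Definition~\ref{def2}, which controls $\sum_p c_k(p)\overline{c_\ell(p)}/p$ \emph{without} that normalising factor. A Cauchy--Schwarz estimate against \eqref{c} does not save you either, because $\Lambda$ bounds a fourth moment of $|c_k|$, not of $|c_k|^{-1}$, and nothing in the hypotheses prevents $|c_k(p)|$ from being small. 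So the leakage is not directly controllable, and the ``iterative correction'' you defer to is precisely where the argument is, not a routine afterthought. Moreover, even if you could pick phases with $|\omega(p)|$ only $\leq 1$ rather than $=1$, you still need a mechanism to round back to unimodular values while controlling the error \emph{simultaneously in all $n$ coordinates}; your disk-filling observation is a one-coordinate statement and does not give this.

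The paper resolves both points with two devices you do not use. First, instead of normalising by $|c_k(p)|$, it sets $d_p=e^{i\arg v_k}\overline{c_k(p)}/B$ with a fixed cutoff $B=2\sqrt{n\Lambda/\lambda}$ and $d_p=0$ when $|c_k(p)|>B$; this makes $|d_p|\leq 1$ and, crucially, makes the cross-terms $\sum_p d_p c_\ell(p)/p$ directly estimable from orthogonality plus the fourth-moment bound \eqref{c} (the chain \eqref{evert}--\eqref{yut2}). Second, it invokes the Hilbert-space rearrangement lemma (Lemma~\ref{LE12}, i.e.\ \cite[Lemma 5.2]{Steuding}) to pass from $|d_p|\leq 1$ to $|\omega(p)|=1$ with an $\ell^2(\C^n)$ error of size $\bigl(\sum_p\|\bm c(p)\|^2\bigr)^{1/2}$, which is uniformly small over blocks starting at large $P$. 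Finally, this only reduces the residual by the factor $1-\tfrac1{4n}$ per pass (Lemma~\ref{LE5}), so the construction is necessarily iterative across nested sub-blocks, and the explicit bound \eqref{N1c} is what guarantees that the accumulated iterations fit inside a single block $[P,P^{1+\xi})$; this is exactly how the constant $\tfrac18\sqrt{\lambda^3/(n^3\Lambda)}$ in \eqref{xgkx} enters. Your ``split $I_j$ into $n$ equal sub-blocks, one per coordinate'' scheme is a different partition that would still have to be embedded in such a convergent iteration, and you have not shown that it converges or that it stays within the block. Likewise, absorbing the (possibly large) constants $C_k$ and the fixed small-prime contribution into ``an enlarged initial block'' is plausible in spirit, but the paper does this with a separate accumulation step (Lemma~\ref{lem6}), iterating Lemma~\ref{lem5} over $N$ blocks of width $\xi=1$; your one-block version would need a separate argument that a single block can carry arbitrarily large targets without disturbing the unimodular-rounding and leakage estimates.

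In short: you have correctly identified where the work is, but the central quantitative step, finding unimodular phases over a single block that hit a prescribed $n$-vector while keeping cross-terms small, is asserted rather than proved, and the specific device you gesture at (phase-aligning with $\overline{c_k}/|c_k|$ and appealing to orthogonality) does not work as stated.
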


 We are now ready to prove Theorem \ref{TH3}. 
  We will let $\omega:\Z^+ \to \C$ denote a completely multiplicative unimodular function and define
 \begin{gather*} 
    L_k(s,\omega):=\sum_{n=1}^\infty \frac{\omega(n) c_k(n)} {n^{s}}, \\ \intertext{and}
    E_k(s,\omega):=\exp \left(\sum_{p}  \frac{\omega(p) c_k(p)} {p^{s}} \right) \frac 1 {L_k(s,\omega)} = \sum_{j=1}^\infty \frac{\omega(j) e_k(j)} {j^{s}}.
    \end{gather*}
    It is clear that $E_k(s,\omega)$ is an zero-free and  absolutely convergent Dirichlet series for $\Re(s) \geq 1$. 
     Let $\omega_1(p)=a_p$ for primes $2 \leq p \leq N$ and $\omega_1(p)=1$ for primes $p>N$.   Since $E_k(1,\omega_1) \neq 0$ is absolutely convergent there exists some $N_0 \geq N$ such that for any $|\omega_2(n)|=1$ where $\omega_2(n)=\omega_1(n)$ for $n \leq N_0$ we have that
     \begin{gather}  
     \label{tra2aa}
      \abs{\log E_k(1,\omega_1) - \log \p{\sum_{j=1}^{N_0} \frac {\omega_2(j)e_k(j) } j}} <\frac{\varepsilon} 4. 
      \end{gather}
     Define 
    \begin{gather*}
      D_k:=C_k+\log E_k(1,\omega_1).
      \end{gather*}
      By Lemma \ref{Le2} we may find a unimodular completely multiplicative function  $\omega_2:\Z^+ \to \C$  such that $\omega_2(p)=\omega_1(p)$ if $p \leq N_0$ and such that
      \begin{gather}  \label{tra1}
       \max_{1 \leq k \leq n} \max_{s \in K}  \abs{\sum_{p}  \frac{\omega_2(p) c_k(p)} {p^{1+\delta s}} -f_k(s)-D_k} <\frac{\varepsilon} 4.
      \end{gather}
      Since 
      $L_k(s,\omega_2)$ is absolutely convergent for $\Re(s)>1$ we may now choose $N_1>N_0$ sufficiently large such that
      \begin{gather} 
       \label{tra2a}
      \sum_{ N_1 \leq p} \frac{\abs{c_k(p)}} {p^{1+\delta \xi}} <\frac{\varepsilon} 4, \qquad \text{where} \qquad \xi=\min_{s \in K} \Re(s)>0.
      \end{gather}
      It is clear that if 
     \begin{gather} \label{ire5}\max_{2 \leq p < N_1} \abs{p^{-it}-\omega_2(p)}<\varepsilon_2. \end{gather}
      for some sufficiently small $\varepsilon_2>0$ 
         then 
      \begin{gather} \label{tra2}
       \max_{1 \leq k \leq n} \max_{s \in K}  \abs{\sum_{p<N_1}  \frac{\omega_2(p) c_k(p)} {p^{1+\delta s}} -\sum_{p<N_1}  \frac{c_k(p)} {p^{1+it+\delta s}} } <\frac{\varepsilon} 4.
      \end{gather}
It follows by  \eqref{tra2aa}, \eqref{tra1}, \eqref{tra2a}, \eqref{tra2} and the triangle inequality that
\begin{gather*}
\max_{2 \leq p \leq N} \abs{p^{it}-a_p}<\varepsilon, \, \,  \text{and} \, \, \max_ {1 \leq k \leq n} \max_{s \in K} \abs{\log L_k(1+it+\delta s)-f_k(s)} <\varepsilon,
\end{gather*}
whenever $t$ satisfies the inequalities \eqref{ire5} for some sufficiently small $\varepsilon_2>0$. By Weyl's version (see \cite{Weyl} or  \cite[Lemma 1.8]{Steuding}) of the Kroenecker's approximation theorem the set of $0 \leq t \leq T$ where \eqref{ire5} holds has positive lower density as $T \to \infty$. \qed

 \section{Proof that Theorem \ref{TH3} implies Theorem \ref{TH1} \label{sec3}}

The next lemma appears as
\cite[Lemma 1]{Andersson9} and is a consequence of Mergelyan's theorem and the theory of Laplace transforms. 
\begin{lem} \label{LA3} Assume that $f$ is any zero-free function on a compact set $K$ with connected complement, that is analytic in the interior of $K$. Then given $\varepsilon>0$ there exist some $A,B,N>0$  and continuous function $g:[A,B] \to \C$ such that $|g(x)| \leq N$ and that if
\begin{gather*}
  G(s) = \int_A^B g(x) e^{-sx} dx, \\ \intertext{then}
  \max_{s \in K} \abs{G(s)-f(s)}<\varepsilon,
\end{gather*}
\end{lem}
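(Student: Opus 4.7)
The plan is to reduce the problem to approximating $f$ by rational functions with a single pole placed to the left of $K$, then to express those rational functions explicitly as truncated Laplace transforms of bounded continuous functions. Since $K$ has connected complement and $f$ is continuous on $K$ and analytic in its interior, Mergelyan's theorem first provides a polynomial $P$ with $\max_{s \in K}|f(s) - P(s)| < \varepsilon/3$. Pick $\sigma_0 \in \C$ with $\Re(\sigma_0) < \xi := \min_{s \in K} \Re(s)$; since $P$ is analytic in a neighborhood of $K$ and $\C \setminus K$ is connected, Runge's theorem then lets one approximate $P$ on $K$ within $\varepsilon/3$ by a rational function $R(s) = c_0 + \sum_{k=1}^m b_k/(s - \sigma_0)^k$ whose only pole is at $\sigma_0$.

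Each reciprocal-power term has the explicit Laplace representation
\[
\frac{1}{(s - \sigma_0)^k} = \int_0^\infty \frac{x^{k-1}}{(k-1)!} e^{\sigma_0 x} e^{-sx}\, dx,
\]
valid on $K$ because $\Re(s - \sigma_0) > 0$ there. Truncating to a bounded interval $[A, B]$ with $A$ small and $B$ large introduces a tail error controlled by $\int_B^\infty x^{k-1} e^{(\Re(\sigma_0) - \xi)x}/(k-1)!\, dx$ and hence exponentially small. The resulting truncated integrand $g_k(x) = b_k x^{k-1} e^{\sigma_0 x}/(k-1)!$ is continuous and bounded on $[A, B]$, and summing yields a Laplace representation of $\sum_{k \geq 1} b_k/(s - \sigma_0)^k$ to any desired accuracy on $K$.

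The delicate piece is the constant $c_0$, since a generic Laplace transform $G(s) = \int_A^B g(x) e^{-sx}\, dx$ tends to zero as $\Re(s) \to \infty$ while a nonzero constant does not; a naive delta-type construction would force $\|g\|_\infty$ to blow up. The resolution is to approximate $c_0 \approx -c_0 \sigma_1/(s - \sigma_1)$ for an auxiliary $\sigma_1$ with $\Re(\sigma_1)$ very negative: since $-\sigma_1/(s - \sigma_1) = 1/(1 - s/\sigma_1) \to 1$ uniformly on the compact set $K$ as $|\sigma_1| \to \infty$, one can bring this within $\varepsilon/3$, and the right-hand side has the preceding Laplace representation with integrand $-c_0 \sigma_1 e^{\sigma_1 x}$, which is bounded (in fact exponentially small in $x$ on $[A, B]$ once $\Re(\sigma_1) \ll 0$). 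Assembling the two families of integrands gives a continuous $g : [A, B] \to \C$ bounded by some $N$ such that $G(s) = \int_A^B g(x) e^{-sx}\, dx$ satisfies $\max_{s \in K} |G(s) - f(s)| < \varepsilon$ by the triangle inequality. The zero-free hypothesis on $f$ plays no role in the Laplace-transform approximation itself; it is invoked only in the downstream applications where $\log f$ must be well defined and single-valued on $K$.
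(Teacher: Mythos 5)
Your high-level route — Mergelyan to a polynomial, pole-pushing to a rational function with a single pole at $\sigma_0$ left of $K$, explicit Laplace kernels for $(s-\sigma_0)^{-k}$, then truncation — is a reasonable and self-contained way to establish the lemma, and it is in the same spirit as what the paper invokes (the paper simply cites Lemma~1 of \cite{Andersson9}, described as a consequence of Mergelyan plus Laplace transform theory, without reproducing the argument). Two remarks, one cosmetic and one substantive.

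The cosmetic one: your concern that ``a naive delta-type construction would force $\|g\|_\infty$ to blow up'' is moot for this lemma. The statement only asks for \emph{some} $N>0$ with $|g|\le N$; since $g$ is continuous on the compact interval $[A,B]$ this is automatic, and $N$ is allowed to depend on $\varepsilon$. (Quantitative bounds on $g$ matter in Theorem~\ref{TH3} via \eqref{xgkx}, but there one divides by a large $C_k$ precisely to restore them; see \eqref{Ck}.)

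The substantive one is the handling of $c_0$, which as written does not go through. You first fix $[A,B]$, then take $\Re(\sigma_1)\ll 0$, and you offer as reassurance that the integrand $-c_0\sigma_1 e^{\sigma_1 x}$ is ``bounded (in fact exponentially small in $x$ on $[A,B]$ once $\Re(\sigma_1)\ll 0$).'' But if the integrand is uniformly small on $[A,B]$, so is the integral: indeed
\begin{gather*}
\int_A^B (-c_0\sigma_1)\,e^{(\sigma_1-s)x}\,dx=\frac{-c_0\sigma_1}{\sigma_1-s}\left(e^{(\sigma_1-s)B}-e^{(\sigma_1-s)A}\right)\longrightarrow 0
\end{gather*}
as $\Re(\sigma_1)\to-\infty$ with $A,B,s$ fixed (both exponentials vanish), whereas you need it to tend to $c_0$. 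The point is that the mass of the full Laplace integral for $-c_0\sigma_1/(s-\sigma_1)$ concentrates on the interval $[0,\,O(1/|\sigma_1|)]$, which your truncation $[A,B]$ discards once $A$ is fixed. The repair is to reverse the order of choices: first pick $\sigma_1$ with $|\sigma_1|$ large enough that $\left|1-\tfrac{-\sigma_1}{s-\sigma_1}\right|=\left|\tfrac{s}{s-\sigma_1}\right|$ is small on $K$, and only afterwards take $A$ small enough that $|\sigma_1|A$ is small (so $e^{(\sigma_1-s)A}\approx 1$ on $K$) and $B$ large enough for all tails. With that ordering the integrand at $x=A$ has size $\approx|c_0\sigma_1|$ --- large, not exponentially small, but still bounded, which is all the lemma requires. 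With this correction the proof is fine.
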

\begin{proof} See \cite[Lemma 1]{Andersson9}. \end{proof}

\noindent{\em Proof of Theorem \ref{TH1}.}
Let
 \begin{gather*}
  L_k(s)=L_{1,k}(s) L_{2,k}(s)+L_{3,k}(s) 
\end{gather*}
where \begin{gather*}
 L_{1,k}(s)=\sum_{j=1}^\infty c_k(j) j^{-s} = \prod_{p} \sum_{l=0}^\infty c_k(p^l) p^{-ls}
\end{gather*}
has an Euler product and the 
 Dirichlet series \begin{gather*} L_{2,k}(s)=1+\sum_{j=2}^\infty b_k(j)  j^{-s}, \qquad \text{and} \qquad  L_{3,k}(s)=\sum_{j=1}^\infty  d_k(j) j^{-s} \end{gather*} are absolutely convergent for $\Re(s) \geq 1$. Since $L_{2,k}(s)$ for each $1 \leq k \leq n$ are absolutely convergent and not identically zero for $\Re(s) \geq 1$ there must exist some $t_0 \in \R$ such that 
 \begin{gather}  \label{sr1}
 \min_{k=1,\ldots,n} \left| L_{2,k}(1+it_0) \right|=\xi>0.\end{gather}
Let 
\begin{gather} \label{Fks}  F_k(s)=f_k(s)-L_{3,k}(1+it_0).
\end{gather}
By Lemma \ref{LA3} there exist some $0<A<B$ and continuous functions  $g_k:[A,B] \to \C$ such that
\begin{gather} \label{ab0} 
 \max_{s \in K}  \abs{G_k(s)-    F_k(s)}<\frac{\varepsilon} 8,  \\ \intertext{where} 
    G_k(s)=\int_A^B g_k(x) e^{-sx} dx. \label{Gks}
\end{gather}
Let \begin{gather} \label{Ck} |C_k| \geq  8  n^{3/2}\Lambda^{1/2} \lambda^{-3/2} \max_{A \leq x \leq B} |x g_k(x)| \end{gather}  be given.  
Since   $L_{2,k}(s)$ 
 and $L_{3,k}(s)$ are absolutely convergent for $\Re(s)\geq 1$ it follows from \eqref{sr1} that for any $\varepsilon>0$ there exists some $N \in \Z^+$, $\delta_1>0$  and $\varepsilon_1>0$ such that if
\begin{gather} \label{ab3a}
  \max_{2 \leq p \leq N} |p^{-it}-p^{-it_0}|<\varepsilon_1, 
\end{gather}
and $0 <\delta \leq \delta_1$ then
  \begin{gather}\label{ab2j} \max_{1 \leq k \leq n} \max_{s \in K} \abs{\log(L_{2,k}(1+\delta s+it))-\log(L_{2,k}(1+it_0))}<\frac{\varepsilon}{8|C_k|}, \end{gather}
  and
   \begin{gather} \label{uit1} \max_{1 \leq k \leq n} \max_{s \in K} \abs{L_{3,k}(1+\delta s+it)-L_{3,k}(1+it_0))}<\frac \varepsilon 8. \end{gather}
  The conditions \eqref{Gks} and \eqref{Ck} ensures us that $G_k(s)/C_k$ satisfies the condition of  Theorem \ref{TH3}  so that we may use\footnote{It is clear that if $L_k$ are jointly orthogonal then also $L_{1,k}$ are jointly orthogonal, see remark under Definition \ref{def2}.} Theorem \ref{TH3} with $a_p=p^{-it_0}$ for $p \leq N$ and  there exists some $\delta_2 >0$ such that \eqref{ab3a} (and thus also \eqref{ab2j} and \eqref{uit1}) and 
\begin{gather} \label{ab2}
  \max_{s \in K} \abs{\log L_{1,k}(1+it+\delta s)-\frac{G_k(s)} {C_k}+\log L_{2,k}(1+it_0)- \log (-C_k)}<\frac {\varepsilon} {8|C_k|}
\end{gather}
holds for any  $0<\delta \leq \delta_2$  with a  positive lower measure $0\leq t \leq T$ as $T \to \infty$.
It is clear that
\begin{gather} \label{iden2}
   \log(G_k(s)-C_k)=\log \p{-C_k \p{1- \frac {G_k(s)}  {C_k}}}= \log (-C_k)+\log \p{1-\frac {G_k(s)}   {C_k}}.
\end{gather}
Since 
\begin{gather} \label{ab11}
\max_{1 \leq k \leq n} \max_{s   \in K} \abs{\frac{G_k(s)} {C_k} +\log \p{1-\frac {G_k(s)} {C_k}}}<\frac{\varepsilon} {8|C_k|},
\end{gather}
for $C_k$ sufficiently large, it follows by  \eqref{ab2j}, \eqref{ab2}, \eqref{ab11} and the triangle inequality that 
 \begin{gather} \label{ab23}
  \max_{s \in K} \abs{\log \p{L_{1,k}(1+it+\delta s) L_{2,k}(1+it+\delta s)}- \log(G_k(s)-C_k)}<\frac {3\varepsilon} {8|C_k|}
\end{gather}
holds as well as \eqref{uit1} holds for $0<\delta \leq \delta_0:=\min(\delta_1,\delta_2)$ with positive lower measure  $0 \leq t \leq T$ as $T \to \infty$.
From the elementary inequality $|e^z-1| \leq  3|z|/2$ if $|z| \leq 1/2$ it follows that
\begin{gather} \label{rrra}
  |X-Y| = \abs{Y} \cdot \abs{e^{\log(X/Y)} -1} < 3\abs{Y}/2 \cdot \abs{\log X-\log Y},
\end{gather}
when $\abs{\log X-\log Y} \leq 1/2$. Now let $X=L_{1,k}(1+it+\delta s) L_{2,k}(1+it+\delta s)$ and $Y= G_k(s)-C_k$. From \eqref{Ck} it follows that $|Y| \leq 5|C_k|/4$ for $s \in K$, and from \eqref{ab23} it follows that $\abs{\log X-\log Y} \leq 1/2$. Thus from \eqref{ab23} and \eqref{rrra} it follows that
\begin{multline}
  \max_{s \in K} \abs{L_{1,k}(1+it+\delta s) L_{2,k}(1+it+\delta s) +C_k-G_k(s)}  < \frac 3 2   \cdot \frac {5|C_k|} 4 \cdot \frac{3\varepsilon}{8|C_k|} <\frac {3\varepsilon} 4 \label{ajaj}  
\end{multline}
holds for any  $\delta,t$ such that \eqref{ab2} and \eqref{ab2j}  holds. By   \eqref{Fks}, \eqref{ab0}, \eqref{uit1},  \eqref{ajaj}, and the triangle inequality it follows that
\begin{gather*}
  \max_{s \in K} \abs{L_k(1+it+\delta s) +C_k-f_k(s)}<\varepsilon,
\end{gather*}
holds when $0<\delta\leq \delta_0$ with positive lower measure  $0 \leq t \leq T$ as $T \to \infty$. \qed

 \section{Proof of Lemma \ref{Le2} \label{sec4}} 
 \subsection{Some preliminary Lemmas}
 We are going to need the following Lemma \cite[Lemma 5.2]{Steuding} on Hilbert-spaces\footnote{This Lemma has important application on the classical Voronin universality theorem since it together with the Riesz representation theorem is used to prove the Pechersky rearrangement theorem which is the standard tool to prove classical universality.}
 
 \begin{lem} \label{LE12} Let $x_1,\ldots,x_m$ be points in a complex Hilbert space $\mathcal H$  and let $a_1,\ldots,a_m$ be complex numbers with $|a_j| \leq 1$ for $1\leq j \leq m$. Then there exist complex numbers $b_1,\ldots,b_m$ with $|b_j|=1$ for $1\leq j \leq n$ satisfying the inequality
 \begin{gather*}
   \norm{ \sum_{j=1}^m a_j x_j -\sum_{j=1}^m b_j x_j}^2 \leq 4 \sum_{j=1}^m \norm{x_j}^2
 \end{gather*}
 \end{lem}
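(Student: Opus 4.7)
My plan is to prove this by a probabilistic rounding argument. The idea is that each $a_j$ with $\abs{a_j}\leq 1$ lies as the midpoint of some chord of the unit circle, and randomly rounding to either endpoint of that chord produces a zero-mean, uniformly bounded perturbation for which the standard independence calculation yields the desired bound.

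First I would make the decomposition: writing $a_j = r_j e^{i\phi_j}$ with $r_j=\abs{a_j}\in[0,1]$, pick $\theta_j\in[0,\pi/2]$ with $\cos\theta_j = r_j$ and set $u_j := e^{i(\phi_j+\theta_j)}$ and $v_j := e^{i(\phi_j-\theta_j)}$. Both $u_j,v_j$ are unimodular, $\tfrac{1}{2}(u_j+v_j)=a_j$, and a short calculation using the parallelogram identity gives $\abs{u_j-v_j}^2 = 4\sin^2\theta_j = 4(1-\abs{a_j}^2)\leq 4$.

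Next I would introduce independent uniformly distributed signs $\varepsilon_1,\ldots,\varepsilon_m\in\{+1,-1\}$, and set $b_j := u_j$ when $\varepsilon_j=+1$ and $b_j := v_j$ otherwise. Then $\abs{b_j}=1$, the perturbation $b_j - a_j = \tfrac{1}{2}\varepsilon_j(u_j-v_j)$ has expectation zero, and the differences $\{b_j-a_j\}_{j=1}^m$ are independent. Expanding the squared norm,
\begin{gather*}
\mathbf{E}\,\norm{\sum_{j=1}^m (a_j-b_j)x_j}^2 = \sum_{j,k} \mathbf{E}\bigl[(a_j-b_j)\overline{(a_k-b_k)}\bigr]\,\ip{x_j}{x_k},
\end{gather*}
independence together with the centering wipes out every off-diagonal term, and what remains is $\sum_{j=1}^m \abs{b_j-a_j}^2 \norm{x_j}^2 \leq \sum_{j=1}^m \norm{x_j}^2$. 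Some realization of the $\varepsilon_j$ must therefore produce $b_j$'s with $\norm{\sum_j (a_j-b_j)x_j}^2 \leq \sum_j \norm{x_j}^2 \leq 4\sum_j\norm{x_j}^2$.

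I do not expect any serious obstacle here --- the only delicate points are verifying $\abs{u_j-v_j}^2 = 4(1-\abs{a_j}^2)$ and checking that $\mathbf{E}[(a_j-b_j)\overline{(a_k-b_k)}]$ factors to zero for $j\neq k$ by independence. The constant $4$ in the statement is loose; this approach actually gives the stronger constant $1$, but $4$ is clearly all that is needed downstream.
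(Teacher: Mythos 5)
Your probabilistic rounding argument is correct and complete. Note, though, that the paper does not prove this lemma at all; it cites it as Lemma 5.2 of Steuding's book and uses it as a black box, so there is no ``paper's proof'' to compare against. Your route --- decomposing each $a_j$ as the midpoint $\tfrac12(u_j+v_j)$ of a chord of the unit circle, rounding randomly to an endpoint, and using independence plus zero mean to kill the cross terms --- is the standard way to establish this Pechersky-type bound, and as you observe it actually yields the sharper estimate with $\sum_j(1-\abs{a_j}^2)\norm{x_j}^2\le\sum_j\norm{x_j}^2$ on the right, comfortably inside the stated constant $4$. Your appeal to the parallelogram identity is apt: $\abs{u_j+v_j}^2+\abs{u_j-v_j}^2=2\abs{u_j}^2+2\abs{v_j}^2=4$ together with $\abs{u_j+v_j}=2\abs{a_j}$ gives $\abs{u_j-v_j}^2=4(1-\abs{a_j}^2)$ immediately. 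If one prefers to avoid probability, the same cancellation $(a_j-u_j)+(a_j-v_j)=0$ turns your expectation computation into a deterministic induction: for any partial sum $S$ one has $\norm{S+(a_j-u_j)x_j}^2+\norm{S+(a_j-v_j)x_j}^2=2\norm{S}^2+2(1-\abs{a_j}^2)\norm{x_j}^2$, so the better of the two choices of $b_j$ does at least as well as the average. That is a cosmetic variant of your argument, not a gap; the proof as written is sound.
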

 In our approach it is sufficient to apply this result on the Hilbert space $\cH=\C^n$. Assuming a Ramanujan-Petersson conjecture for the Dirichlet series $L_k(s)$ would be convenient at this stage but the next Lemma shows that \eqref{c}  is a sufficent replacement.

   \begin{lem} \label{LE5}
     Let $\bm{v} \in \C^n$ be such that $\bm{v} \neq \bm{0}$. 
      Let $\bm{c}(p):=p^{-1}(c_1(p),\ldots,c_n(p))$ where $c_1(p),\ldots,c_n(p)$ are the coeffients of the pairwise orthogonal Dirichlet series $L_1(s),\ldots,L_n(s)$ of standard type and order  $(\lambda,\Lambda)$. Then  there exist for any sufficiently large $N$ some coefficients $|d_p| \leq 1$ and  some  $N_0>N$ such that
     \begin{gather} \notag
       \norm{\bm{v} - \sum_{N \leq p<N_0}  d_p  \bm{c}(p)}_2 \leq \p{1-\frac 1 {4n}}\norm{\bm{v}}_2, 
     \\ \intertext{where furthermore we may choose $N_0$ as}
         N_0 \leq N^{\exp \p{2 (1-(8n)^{-1})^{-1} \sqrt {n \Lambda/\lambda^3} \, \norm{\bm{v}}_\infty}}  \label{N1c}
        \end{gather}
   \end{lem}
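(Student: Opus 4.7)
The plan is to reduce the largest coordinate of $\bm v$ while using orthogonality to control the perturbations induced in the remaining coordinates. After permuting indices assume $|v_1|=\norm{\bm{v}}_\infty$; since $|v_1|^2\ge\norm{\bm{v}}_2^2/n$, merely zeroing this coordinate would drop the squared $\ell^2$-norm by at least a factor $1-1/n$, leaving ample slack for the small perturbations that the orthogonality hypothesis can only make vanish in the limit $N\to\infty$.

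For a truncation level $T>0$ and a scalar $\beta\in\C$ to be chosen, set
\[
d_p := \beta\,\overline{c_1(p)}\,\mathbf{1}[\,|c_1(p)|\le T\,],
\]
so that $|d_p|\le 1$ is equivalent to $|\beta|T\le 1$. The first coordinate of $\sum_p d_p\bm{c}(p)$ is then $\beta S$ with $S:=\sum_{|c_1(p)|\le T}|c_1(p)|^2/p$, and the $k$-th coordinate ($k\ne 1$) is $\beta\sum_{|c_1|\le T}c_k(p)\overline{c_1(p)}/p$. Cauchy--Schwarz applied to the lower bound $\sum|c_1(p)|/p\ge\lambda\log(1+\xi)$, combined with $\sum 1/p\approx\log(1+\xi)$ by Mertens, yields $\sum_p|c_1|^2/p\ge\lambda^2\log(1+\xi)$, while the fourth-moment hypothesis \eqref{c} gives the tail bound $\sum_{|c_1|>T}|c_1|^2/p\le\Lambda\log(1+\xi)/T^2$. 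Hence $S\ge\tfrac12\lambda^2\log(1+\xi)$ as soon as $T^2\ge 2\Lambda/\lambda^2$, and the choice $\beta=v_1/S$ makes the first coordinate of $\sum d_p\bm{c}(p)$ exactly $v_1$; the feasibility condition $|\beta|T\le 1$ then becomes $|v_1|T\le S$. For $k\ne 1$, the orthogonality hypothesis of Definition~\ref{def2} gives $\sum_p c_k\overline{c_1}/p\to 0$ as $N\to\infty$, and a second application of Cauchy--Schwarz with the fourth-moment bound (using $\mathbf{1}[|c_1|>T]\le|c_1|^4/T^4$) controls the truncation tail and yields $\abs{\sum_{|c_1|>T}c_k\overline{c_1}/p}\le\Lambda\log(1+\xi)/T^2$. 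Consequently the perturbation in the $k$-th coordinate satisfies $|\epsilon_k|\le|\beta|\,\Lambda\log(1+\xi)/T^2 + o_{N\to\infty}(1)$.

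Expanding $\norm{\bm{v}-\sum d_p\bm{c}(p)}_2^2\le(\norm{\bm{v}}_2^2-|v_1|^2)+2\sum_{k\ne 1}|v_k||\epsilon_k|+\sum_{k\ne 1}|\epsilon_k|^2$ and bounding the cross term via $\sum|v_k|\le\sqrt{n}\,\norm{\bm{v}}_2$, one optimises $T$ at its upper endpoint $T=S/|v_1|$ and substitutes $|\beta|=|v_1|/S$ to express $\max_k|\epsilon_k|$ purely in terms of $|v_1|$ and $\log(1+\xi)$. Requiring the total error to fit within the budget dictated by the target reduction factor $1-(4n)^{-1}$ --- with the $(1-(8n)^{-1})$ factor arising from the split between the main reduction at $k=1$ and the cumulative cross-term loss across the remaining $n-1$ coordinates --- determines the smallest admissible $\log(1+\xi)$, namely $2(1-(8n)^{-1})^{-1}\sqrt{n\Lambda/\lambda^3}\,\norm{\bm{v}}_\infty$, which via $N_0=N^{1+\xi}$ recovers the bound stated in \eqref{N1c}. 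The main obstacle is the simultaneous balancing of the three competing demands on $T$: large enough both for $S\approx\lambda^2\log(1+\xi)$ and for the tail $\Lambda\log(1+\xi)/T^2$ to be negligible compared to the orthogonality $o(1)$, yet small enough that $|\beta|T\le 1$ while still keeping each $|\epsilon_k|$ within its share of the error budget, so that the constants work out to match \eqref{N1c} rather than producing a weaker exponent in $\lambda$.
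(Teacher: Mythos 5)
Your high-level strategy --- pick the largest coordinate, use a truncated matched filter $d_p\propto\overline{c_1(p)}\mathbf 1[|c_1(p)|\le T]$, control the diagonal term from the lower bound on $\sum|c_1(p)|/p$ and the off-diagonal terms via orthogonality plus the fourth-moment tail estimate --- is the same as the paper's. The Cauchy--Schwarz step deducing $\sum|c_1(p)|^2/p\ge\lambda^2\log(1+\xi)$ from the two moment hypotheses, and the Markov-type tail bounds in $T$, are also the same ingredients the paper uses (its Lemma~\ref{LE5} deduces analogues with $B$ in place of $T$). However, there are two genuine gaps in what you have written.

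First, your prescription $\beta=v_1/S$ with ``$T$ at its upper endpoint $T=S/|v_1|$'' is circular: $S=\sum_{|c_1(p)|\le T}|c_1(p)|^2/p$ depends on $T$ (and on the as-yet-unspecified stopping index $N_0$), so $T=S(T)/|v_1|$ is a fixed-point equation whose solvability you neither address nor need. The paper sidesteps this entirely by fixing the threshold $B=2\sqrt{n\Lambda/\lambda}$ in advance (it depends only on $n,\lambda,\Lambda$), taking $|d_p|=|c_k(p)|/B\le1$ automatically, and choosing $N_0$ as the greedy stopping point at which the accumulating $k$-th coordinate first reaches $|v_k|$, so no fixed point is needed. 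Second, and more seriously, the claim that ``requiring the total error to fit within the budget\ldots determines the smallest admissible $\log(1+\xi)$, namely $2(1-(8n)^{-1})^{-1}\sqrt{n\Lambda/\lambda^3}\,\|\bm v\|_\infty$'' is not substantiated and does not appear to be correct under your parametrisation. If you track the constraint that $\max_{k\ne 1}|\epsilon_k|\lesssim|v_1|/(n^{3/2})$ (which is what the budget $1-(4n)^{-1}$ forces) through $|\epsilon_k|\le|\beta|\Lambda\log(1+\xi)/T^2$ with $|\beta|=|v_1|/S$, $S\ge\tfrac12\lambda^2\log(1+\xi)$ and $T\le S/|v_1|$, the required lower bound on $\log(1+\xi)$ comes out with an extra factor of $\lambda^{-3/2}$ beyond \eqref{N1c}, i.e.\ you only get $N_0\le N^{\exp(c\,n^{3/4}\sqrt\Lambda\,\lambda^{-3}\|\bm v\|_\infty)}$ rather than the $\lambda^{-3/2}$ exponent claimed. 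You would need to either carry out the budget computation explicitly and exhibit a choice of $T$ that achieves the stated constant, or switch to the paper's scheme where the threshold $B$ and the stopping rule for $N_0$ are decoupled.
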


      \begin{proof} 
         By \eqref{c} we find that
               \begin{gather} \label{evert} \sum_{\substack{N \leq p<N^{1+\xi} \\ |c_k(p)| \geq B}}   \frac  1 p  \leq \frac {\Lambda \log(1+\xi)} {B^4}+o(1), \qquad (\xi>0),
               \end{gather}  
                     where the little ordo notation, as in the rest of the proof, is with respect to $N$.
                 By a repeated application of the Cauchy-Schwarz inequality and then using \eqref{c} we get 
                 \begin{align*}   
            \abs{ \sum_{\substack{N \leq p<N^{1+\xi} \\ |c_k(p)| \geq B}} \frac{c_k(p)c_j(p)} p}^2  &\leq \sum_{\substack{N\leq p<N^{1+\xi} \\ |c_k(p)| \geq B}} \frac{\abs{c_k(p)}^2} p  \sum_{\substack{N \leq p<N^{1+\xi} \\ |c_k(p)| \geq B}} \frac{\abs{c_j(p)}^2} p,  \\ 
             &\leq \sqrt{\sum_{\substack{N \leq p<N^{1+\xi} \\ |c_k(p)| \geq B}} \frac{\abs{c_k(p)}^4} p }\sqrt{\sum_{\substack{N \leq p<N^{1+\xi} \\ |c_k(p)| \geq B}} \frac{\abs{c_j(p)}^4} p }   \sum_{\substack{N \leq p<N^{1+\xi} \\ |c_k(p)| \geq B}} \frac 1 p,
             \\   &\leq \p{\Lambda \log(1+\xi)+o(1)}  \sum_{\substack{N \leq p<N^{1+\xi} \\ |c_k(p)| \geq B}}\frac 1 p, \\ &\leq \frac{\p{\Lambda \log(1+\xi)}^2}{B^4}+o(1),  
               \end{align*}
         where the last inequality follows by \eqref{evert}. Thus by taking the square roots we have
      \begin{gather} \label{yut}
          \abs{\sum_{\substack{N \leq p<N^{1+\xi} \\ |c_k(p)| \geq B}} \frac{c_k(p)c_j(p)} p}  \leq    \frac{\Lambda \log(1+\xi)} {B^2}+o(1). 
                 \end{gather}
                 By the Cauchy-Schwarz inequality and the inequalities \eqref{evert} and \eqref{yut} we obtain
                 \begin{gather} \label{yut2}
          \abs{\sum_{\substack{N \leq p<N^{1+\xi} \\ |c_k(p)| \geq B}} \frac{c_k(p)} p}  \leq    \frac{\Lambda \log(1+\xi)} {B^3}+o(1). 
                 \end{gather}
                                  By  the fact that $L_{k}(s)$ is of order $(\lambda,\Lambda)$ and  \eqref{yut2}, \eqref{Bdef} we get that
 \begin{gather} \label{ajjoj}
                     \sum_{\substack{N \leq p<N^{1+\xi} \\ |c_{k}(p)| \leq B}} \frac{\abs{c_{k}(p)}} {p}>\p{\lambda-\frac{\Lambda} {B^3}} \log(1+\xi)+o(1), 
                 \end{gather}
for $\xi>0$.  In the rest of the proof we will let $1 \leq k \leq n$ be chosen such that \begin{gather} \notag 
                \abs{v_k}=\norm{\bm{v}}_\infty. \\ \intertext{Define  $B$ by}  \label{Bdef} B:= 2 \sqrt{\frac{n \Lambda} \lambda}.
                \end{gather}
        By \eqref{ajjoj}  with this choice of $B$ we get that                                        \begin{gather} \label{ajj}
                     \sum_{\substack{N \leq p<N^{1+\xi} \\ |c_{k}(p)| \leq B}} \frac{\abs{c_{k}(p)}} {p} \geq  \p{1-\frac 1 {8n}} \lambda \log(1+\xi)+o(1), \qquad
                 \end{gather}
                 for $\xi>0$. Define \begin{gather}
                           \label{bpdef} 
                      d_p:=\begin{cases}  \exp(i \arg v_{k}) \frac{ \overline{c_{k}(p)}} B,  & |c_{k}(p)| \leq B, \\ 0, & \text{otherwise,} \end{cases}
                  \end{gather}
              and $N_0>N$ that minimizes
              \begin{gather} \notag
               \abs{v_{k}-\sum_{N \leq p<N_0} \frac{d_p c_{k}(p)} p}. \\ \intertext{Let $\xi$ be defined so that } \label{N0def}  N_0=N^{1+\xi}. \end{gather}
          It follows by \eqref{ajj} and the definition of $N_0$ that
          \begin{gather}
             \frac 1 B \p{1-\frac 1 {8n}} \lambda \log(1+\xi)\leq  \norm{\bm{v}}_\infty+o(1), \notag \\ \intertext{and thus} \label{yurt} \begin{split} \log (1+\xi) &\leq  \p{1-\frac 1 {8n}}^{-1} \frac{B \norm{\bm{v}}_\infty}{\lambda}+o(1), \\ &= 2 \p{1-\frac 1 {8n}}^{-1}  \sqrt{\frac{n \Lambda} {\lambda^3}}\,  \norm{\bm{v}}_\infty+o(1). \end{split}
          \end{gather}
         Define \begin{gather} \notag
     \bm{w}:= \sum_{N \leq p<N_0}   d_p \bm{c}(p). 
     \end{gather}  
       We get the inequality \eqref{N1c} from  \eqref{N0def}, \eqref{yurt} and our assumption.
     By \eqref{c}, \eqref{ajj},  \eqref{bpdef} and \eqref{N0def} it follows that
     \begin{gather}
         \abs{v_{k}-w_{k}}=o(1). \label{AA1}
     \end{gather}
      By \eqref{yut} and \eqref{yurt} it follows that
     \begin{gather} \begin{split} 
      \abs{w_j}&< \frac{\Lambda \log(1+\xi_0)}{B^3} + 
      o(1), \\ &\leq \p{1-\frac 1 {8n}}^{-1}  \frac{\norm{\bm{v}}_\infty \Lambda}{ \lambda B^2} + 
      o(1), \\ &\leq  \p{1-\frac 1 {8n}}^{-1} \frac {\norm{\bm{v}}_\infty} {4n} +o(1), \end{split} \label{AA2}
      \end{gather}
      where the last inequality follows from \eqref{Bdef}.
         It follows from \eqref{AA1} and \eqref{AA2} that 
      \begin{gather} \notag \begin{split}
     \norm{\bm{v}-\bm{w}}_2^2 &\leq  \sum_{j \neq k} \abs{v_j-w_j}^2 +o(1), 
      \\ &\leq \sum_{j \neq k} \abs{v_j}^2+  \sum_{j \neq k} \abs{2 v_j w_j} +\sum_{j \neq k} \abs{w_j}^2 + o(1),   \\   &\leq \norm{\bm{v}}_2^2- \norm{\bm{v}}_\infty^2+\frac {n-1} {2n-1/4}   \norm{\bm{v}}_\infty ^2 +\frac{n-1}{(4n-1/2)^2} \norm{\bm{v}}_\infty^2
         +o(1), \\  & \leq  \norm{\bm{v}}_2^2- \frac{\norm{\bm{v}}_\infty^2} 2+o(1), \\ &\leq \p{1-\frac 1{2n}} \norm{\bm{v}}_2^2  +o(1), \end{split} \\ \intertext{and thus by taking the square roots} \notag 
     \norm{\bm{v}-\bm{w}}_2 \leq \sqrt{1-\frac 1  {2n}} \norm{\bm{v}}_2+o(1) , 
     \end{gather}
     and by using the elementary strict inequality $$\sqrt{1-\frac 1 {2n}} <1-\frac 1 {4n}, \qquad (n \geq 1), $$ it follows that if $N$ is sufficiently large then
     \begin{gather*}
      \norm{\bm{v}-\bm{w}}_2 \leq \p{1-\frac 1  {4n}} \norm{\bm{v}}_2. 
     \end{gather*}   
            \end{proof}

 \begin{lem} \label{lem5}
  Let $L_1(s),\ldots,L_n(s)$  be pairwise orthogonal Dirichlet series of standard type and of order $(\lambda,\Lambda)$, where
   \begin{gather*}
     L_k(s)=\sum_{j=1}^\infty \frac{c_k(j)}{ j^{s}}.
    \end{gather*}
     Then for any $\xi>0$ and $\varepsilon>0$ there exists some $P_0>0$ such that if  $b_k$ satisfies 
      \begin{gather} \label{hury}
      \abs{b_k} \leq \frac 1 8 \sqrt{\frac{\lambda^3}{n^{3} \Lambda}}, \qquad (k=1,\ldots,n),
       \end{gather}
       and $P>P_0$, then  there exists some unimodular numbers $|\omega(p)|=1$ for primes $P\leq p<P^{1+\xi}$  such that
       \begin{gather*}
          \max_{1\leq k \leq n}\abs{\frac 1 {\log(1+\xi)} \sum_{P \leq p<P^{1+\xi}}\frac{\omega(p) c_k(p)} p -b_k} <\varepsilon.
\end{gather*}
\end{lem}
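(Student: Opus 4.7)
The plan is to combine an iterative application of Lemma \ref{LE5} with a single clean-up step based on Lemma \ref{LE12}. Set $\bm{v} := \log(1+\xi)\,\bm{b} \in \C^n$. The first stage constructs coefficients $|d_p| \leq 1$ on $[P, P^{1+\xi}]$ such that $\sum_p d_p\,\bm{c}(p)$ is close to $\bm{v}$ in $\ell^\infty$, where $\bm{c}(p) = p^{-1}(c_1(p),\ldots,c_n(p))$. The second stage then uses Lemma \ref{LE12} in $\C^n$ to replace the $d_p$ by unimodular $\omega(p)$ at a cost that vanishes as $P \to \infty$.

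For the iteration I would set $\bm{v}_0 := \bm{v}$, $P_0 := P$. Given $(\bm{v}_k, P_k)$ with $\bm{v}_k \neq \bm{0}$, invoking Lemma \ref{LE5} with $N = P_k$ produces coefficients $d_p$ on $[P_k, P_{k+1}]$ with $|d_p|\leq 1$, a new residual $\bm{v}_{k+1} := \bm{v}_k - \sum_{P_k \leq p < P_{k+1}} d_p\,\bm{c}(p)$ satisfying $\norm{\bm{v}_{k+1}}_2 \leq (1-\tfrac{1}{4n})\norm{\bm{v}_k}_2$, and a range bound $\log P_{k+1} \leq (\log P_k)\exp(C\,\norm{\bm{v}_k}_\infty)$ with $C := 2(1-\tfrac{1}{8n})^{-1}\sqrt{n\Lambda/\lambda^3}$. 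I would stop at the first $M$ with $\norm{\bm{v}_M}_\infty < \tfrac{\varepsilon}{2}\log(1+\xi)$ and set $d_p := 0$ on $[P_M, P^{1+\xi}]$ if a gap remains. The iterations fit inside $[P, P^{1+\xi}]$ exactly when $C\sum_k \norm{\bm{v}_k}_\infty \leq \log(1+\xi)$; this is precisely where the condition \eqref{hury} with its specific numerical constant enters.

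To promote the $d_p$ to unimodular weights I would apply Lemma \ref{LE12} to the vectors $\bm{c}(p)$ with coefficients $d_p$, obtaining $|\omega(p)| = 1$ with
\[
\norm{\sum_{P \leq p < P^{1+\xi}} (d_p - \omega(p))\,\bm{c}(p)}_{2}^{2} \;\leq\; 4\sum_{P \leq p < P^{1+\xi}} \norm{\bm{c}(p)}_2^2 \;=\; 4\sum_{k=1}^n \sum_{P \leq p < P^{1+\xi}} \frac{|c_k(p)|^2}{p^2}.
\]
A Cauchy--Schwarz estimate, together with \eqref{c} applied to $\sum_p |c_k(p)|^4/p$ and the trivial bound $\sum_{p \geq P} p^{-3} = O(P^{-2})$, makes the right-hand side $O(1/P)$, which is below $\varepsilon^2\log^2(1+\xi)/4$ once $P$ exceeds a suitable $P_0$. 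A triangle inequality then gives $\norm{\sum_p \omega(p)\,\bm{c}(p) - \bm{v}}_\infty < \varepsilon\log(1+\xi)$, equivalent to the stated inequality after dividing by $\log(1+\xi)$.

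The main obstacle is the fitting estimate $C\sum_k \norm{\bm{v}_k}_\infty \leq \log(1+\xi)$. The crude bounds $\norm{\bm{v}_k}_\infty \leq \norm{\bm{v}_k}_2$ and $\norm{\bm{v}_0}_2 \leq \sqrt{n}\,\norm{\bm{v}_0}_\infty$, combined with the geometric sum $\sum_k (1-\tfrac{1}{4n})^k = 4n$, would at first sight cost an extra factor of $\sqrt{n}$; the precise constant $\tfrac{1}{8}\sqrt{\lambda^3/(n^3\Lambda)}$ in \eqref{hury} is calibrated so that, with a careful (likely cyclic) choice of which coordinate of $\bm{v}_k$ is targeted in each application of Lemma \ref{LE5}, one can exchange the $\ell^\infty$ and $\ell^2$ norms of the residuals without losing this factor. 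This delicate bookkeeping is where I would expect the bulk of the remaining work to sit.
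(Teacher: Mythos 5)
Your overall architecture is exactly the paper's: iterate Lemma~\ref{LE5} to drive down the residual vector, then use Lemma~\ref{LE12} once to convert the $|d_p|\le 1$ into unimodular $\omega(p)$ at a cost that vanishes as $P\to\infty$. The Cauchy--Schwarz treatment of $\sum_p\norm{\bm c(p)}_2^2$ in the clean-up step is fine (and, if anything, more careful than the paper, which bounds this by $\sum_p p^{-2}$ as if $\sum_k|c_k(p)|^2\le1$). But your proposal leaves the one step that actually uses the numerical hypothesis \eqref{hury} unproved, and your suggested mechanism for closing it does not work as stated.

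Concretely, you correctly observe that the crude chain $\norm{\bm v_j}_\infty\le\norm{\bm v_j}_2\le(1-\tfrac1{4n})^j\norm{\bm v_0}_2\le(1-\tfrac1{4n})^j\sqrt n\,\norm{\bm v_0}_\infty$ costs a spare $\sqrt n$ in the range estimate $C\sum_j\norm{\bm v_j}_\infty\le\log(1+\xi)$, which would force $n^4$ rather than $n^3$ in \eqref{hury}. You then say this can be recovered by ``a careful (likely cyclic) choice of which coordinate of $\bm v_k$ is targeted.'' That is not available: Lemma~\ref{LE5} is stated for a fixed rule (it always targets the coordinate achieving $\norm{\bm v}_\infty$), so you have no freedom of choice there, and in any case the issue is not coordinate selection but that an $\ell^2$ contraction does not give an $\ell^\infty$ contraction relative to $\norm{\bm v_0}_\infty$. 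The genuine fix comes from a different piece of bookkeeping that you did not invoke: inside the proof of Lemma~\ref{LE5}, the step essentially annihilates the largest coordinate (estimate \eqref{AA1}) while perturbing each of the remaining $n-1$ coordinates only by $(1-\tfrac1{8n})^{-1}\norm{\bm v}_\infty/(4n)+o(1)$ (estimate \eqref{AA2}). Feeding this into an $\ell^1$ potential gives
\[
\norm{\bm v_{j+1}}_1\le\norm{\bm v_j}_1-\Bigl(1-\tfrac{n-1}{4n}(1-\tfrac1{8n})^{-1}\Bigr)\norm{\bm v_j}_\infty+o(1),
\]
so that $\sum_j\norm{\bm v_j}_\infty\lesssim\tfrac{4}{3}\norm{\bm v_0}_1\le\tfrac{4n}{3}\norm{\bm v_0}_\infty$, which is what makes the constant in \eqref{hury} adequate. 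Without this (or an equivalent) argument your proof of the fitting estimate is incomplete, and since that estimate is precisely where \eqref{hury} enters, this is the substantive gap in the proposal. (For what it is worth, the paper's own displayed estimate at this point also writes $\norm{\bm v_0}_\infty$ where the naive $\ell^2\to\ell^\infty$ route would give $\norm{\bm v_0}_2$, so the paper is terse on exactly the step you flagged; you were right to be suspicious, but the resolution needs \eqref{AA1}--\eqref{AA2}, not a cyclic choice of coordinates.)
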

\begin{proof}
 Define \begin{gather}\bm{v_0}:=\log(1+\xi) (b_1,\ldots,b_n), \qquad  \bm{c}(p):=p^{-1}(c_1(p),\ldots,c_n(p)), \qquad P_1=P, \label{v1def} \end{gather} and use  Lemma \ref{LE5} to define $|d_p| \leq 1$ and 
 \begin{gather*}
     \bm{v}_k:=\bm{v}_{k-1}+ \sum_{P_k \leq p<P_{k+1}} d_p \bm{c}(p), \\ \intertext{recursively for $k=1,2,\ldots$ such that}
    \norm{\bm{v}_{k}}_2 \leq \p{1-\frac 1 {4n}}^{k} \norm{\bm{v}_0}_2.
   \end{gather*}
   Since the right-hand side of the inequality tend to zero as $k \to \infty$ we may now choose $k$ sufficiently large such that
  \begin{gather} \notag  \norm{\bm{v}_{k}}_2 \leq \frac{\varepsilon \log(1+\xi)} {2 \sqrt n}, \\ \intertext{and we have that} \label{rot1a}
   \norm{\bm{v}_0-\sum_{P \leq p<P_{k}} d_p \bm{c}(p)  }_2 \leq \frac{\varepsilon \log(1+\xi)} {2 \sqrt n}.
     \end{gather}
     It remains to estimate $P_{k}$ by using \eqref{N1c}. It follows that
      $P_{k}=P^{1+\xi_0}$ where
      \begin{align*} 1+\xi_0 &<\exp \p{ \sum_{k=1}^\infty  \left(1-\frac 1 {8n} \right)^{-1} \p{1-\frac 1 {4n}}^k 2 \sqrt{\frac{ n \Lambda}{\lambda^3}}  \norm{\bm{v}_0}_\infty },  \\ &= \exp \p{\p{1 - \frac 1 {4n}} \p{1-\frac 1 {8n}}^{-1} 8n^{3/2} \Lambda^{1/2} \lambda^{-3/2} \norm{\bm{v}_0}_\infty}, \\ &<  \exp \p{8n^{3/2} \Lambda^{1/2} \lambda^{-3/2} \norm{\bm{v}_0}_\infty}, \\ &\leq
      \exp \p{\log(1+\xi)}=1+\xi,
      \end{align*}
      where the last inequality follows by \eqref{hury} and \eqref{v1def}.
      Thus $P<P_{k}<P^{1+\xi}$. Define $d_p:=0$ for $P_{k}<p<P^{1+\xi}$. 
       By Lemma \ref{LE12} we may find some unimodular numbers $|\omega(p)|=1$ such that
     \begin{gather} \label{rot2}
        \norm{\sum_{P \leq p<P^{1+\xi}} d_p \bm{c}(p) - \sum_{P\leq p<P^{1+\xi}} \omega(p) \bm{c}(p)}_2 \leq 2 \sqrt{\sum_{P\leq p<P^{1+\xi}} \frac 1  {p^2}}< \frac {\varepsilon \log(1+\xi)}{2 \sqrt n},
        \end{gather}
        where the final inequality is true for  $P$ sufficently large.
      By the norm-inequality $\norm{\bm{u}}_\infty \leq \sqrt n \norm{\bm{u}}_2$, the inequalities \eqref{rot1a}, \eqref{rot2} and the triangle inequality it follows that
      $$
       \norm{\bm{v}_0 - \sum_{P\leq p<P^{1+\xi}} \omega(p) \bm{c}(p) }_\infty <\varepsilon \, \log(1+\xi).
     $$
     By recalling the definition \eqref{v1def} and by dividing both sides of this inequality by $\log(1+\xi)$ we obtain our result.
    \end{proof}

The following Lemma may be proved directly but is also a consequence of lemma \ref{lem5}
\begin{lem} \label{lem6}
  Let $a_k$ for $k=1,\ldots,n$, $N_0$ be defined as in Lemma \ref{lem5}. Then given any constants $C_1,\ldots,C_n$ and $\varepsilon>0$ there exists some $P_0$ such that for any $P_1>P_0$ we can find some unimodular numbers $|\omega(p)|=1$ for primes $p<P_1$ where $\omega(p)=a_p$ if $p<N_0$ such that
  \begin{gather*}
     \max_{1\leq k \leq n}\abs{ \sum_{p <P} \frac{\omega(p) c_k(p)} p- C_k} <\varepsilon. 
     \end{gather*}
  \end{lem}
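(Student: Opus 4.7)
My plan is to derive Lemma \ref{lem6} directly from Lemma \ref{lem5} by absorbing the prescribed portion of the sum into adjusted target constants and then applying Lemma \ref{lem5} once on a sufficiently long range of primes. The values $\omega(p)=a_p$ for $p<N_0$ are forced, contributing the fixed quantity $A_k:=\sum_{p<N_0}a_p c_k(p)/p$. After fixing a threshold prime $P\ge N_0$ (to be determined below), I declare $\omega(p):=1$ on the auxiliary range $N_0\le p<P$, which contributes the fixed $B_k:=\sum_{N_0\le p<P}c_k(p)/p$, and define the adjusted targets $C_k':=C_k-A_k-B_k$. The problem reduces to producing unimodular $\omega(p)$ for $P\le p<P_1$ with
\[\max_{1\le k\le n}\abs{\sum_{P\le p<P_1}\frac{\omega(p)c_k(p)}{p}-C_k'}<\varepsilon.\]

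To match the shape of Lemma \ref{lem5}, I rescale: for any $\xi>0$, the values $b_k(\xi):=C_k'/\log(1+\xi)$ satisfy \eqref{hury} as soon as
\[\log(1+\xi)\ge 8\sqrt{n^3\Lambda/\lambda^3}\,\max_{1\le k\le n}|C_k'|,\]
and I let $\xi_0$ denote the corresponding minimum value. Applying Lemma \ref{lem5} with $\xi=\xi_0$, targets $b_k(\xi_0)$, and interior error $\varepsilon/\log(1+\xi_0)$ yields a threshold $P_5$; I set $P:=\max(N_0,P_5)$ and $P_0:=P^{1+\xi_0}$. Given any $P_1>P_0$, I then take $\tilde\xi:=\log P_1/\log P-1\ge\xi_0$, so that $P^{1+\tilde\xi}=P_1$. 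The rescaled targets $\tilde b_k:=C_k'/\log(1+\tilde\xi)$ still satisfy \eqref{hury}, and one application of Lemma \ref{lem5} with parameters $(\tilde\xi,\tilde b_k,\varepsilon/\log(1+\tilde\xi))$ produces unimodular $\omega(p)$ on $[P,P_1)$ such that multiplying the conclusion of Lemma \ref{lem5} through by $\log(1+\tilde\xi)$ and using $\tilde b_k\log(1+\tilde\xi)=C_k'$ yields the displayed inequality above.

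The main technical point is that the exponent $\tilde\xi$ and the interior error fed to Lemma \ref{lem5} both depend on $P_1$, whereas the threshold $P$ must be fixed before $P_1$ is revealed. One must therefore verify that the threshold from Lemma \ref{lem5} is uniform in $\tilde\xi\ge\xi_0$ when the targets scale as $\tilde b_k=C_k'/\log(1+\tilde\xi)$. Tracing through the proof of Lemma \ref{lem5}: with this scaling the initial vector has Euclidean norm $\norm{(C_1',\ldots,C_n')}_2$, independent of $\tilde\xi$; the number of recursive applications of Lemma \ref{LE5} required is therefore bounded uniformly in $\tilde\xi$; the bound \eqref{N1c} gives the same growth of the intermediate prime endpoints; and the final invocation of Lemma \ref{LE12} only needs $\sum_{p\ge P}p^{-2}$ to be small. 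This uniformity is the crux, and once it is in hand the single-application plan succeeds for every $P_1>P_0$.
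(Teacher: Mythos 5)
Your strategy of applying Lemma \ref{lem5} once with a large $\xi$ (rescaling the targets by $\log(1+\xi)$ so that \eqref{hury} holds) is a genuinely different route from the paper, which instead splits the residual target into $N$ equal small pieces $E_k$ and applies Lemma \ref{lem5} $N$ times with the fixed value $\xi=1$ on consecutive ranges $[P^{2^j},P^{2^{j+1}})$. However, as written your construction is circular. You define $B_k:=\sum_{N_0\le p<P}c_k(p)/p$ and $C_k':=C_k-A_k-B_k$, choose $\xi_0$ so that $\log(1+\xi_0)\ge 8\sqrt{n^3\Lambda/\lambda^3}\max_k\abs{C_k'}$, let $P_5$ be the threshold Lemma \ref{lem5} returns for the data $(\xi_0,b_k(\xi_0),\varepsilon/\log(1+\xi_0))$, and only then set $P:=\max(N_0,P_5)$. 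But $B_k$, hence $C_k'$, hence $\xi_0$, hence $P_5$, all depend on $P$, so $P$ cannot consistently be defined as $\max(N_0,P_5)$. Moreover this dependence is not benign: with $\omega(p):=1$ on $[N_0,P)$, the quantity $B_k$ is an unmodified partial sum of $\sum_p c_k(p)/p$, which in general is unbounded as $P\to\infty$ (already for $\zeta$, where $c(p)=1$ and $\sum 1/p$ diverges), so $\abs{C_k'}$, $\xi_0$, and $P_5$ all blow up with $P$ and no fixed point $P\ge P_5(P)$ is guaranteed.

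The paper avoids exactly this by not prescribing $\omega(p)=1$ on the pre-range but instead choosing a unimodular completely multiplicative $\omega_0$ (existing for ``almost all'' choices) for which $\sum_p\omega_0(p)c_k(p)/p=D_k$ \emph{converges}. Then the contribution from $p<P$ is $D_k+o(1)$ uniformly in $P$, the residual target $C_k-D_k$ is a fixed finite quantity, and it can be divided into pieces satisfying \eqref{hury} before any prime threshold is chosen. If you replace $\omega(p)=1$ on the auxiliary range by such a convergent $\omega_0$, the circularity disappears; the remaining issue you correctly flagged --- that the threshold in Lemma \ref{lem5} must be shown uniform over $\tilde\xi\ge\xi_0$ when targets and error both scale as $1/\log(1+\tilde\xi)$ --- still needs to be carried out by re-running the proof of Lemma \ref{lem5}, which your sketch indicates but does not complete. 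The paper's choice of fixed $\xi=1$ sidesteps the need for that uniformity argument altogether.
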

 
\begin{proof} 
 We follow the proof of \cite[Lemma 7]{Andersson9}. Choose\footnote{It follows if $\omega_0$ is chosen randomly a suitable sense. } $|\omega_0(p)|=1$ such that $\omega_0(p)=a_p$ for $p\leq N$ and such that the series 
 \begin{gather*}
  \sum_{p}\frac{\omega_0(p) c_k(p) } p =D_k
   \end{gather*}
are convergent for each $k=1,\ldots,n$.   Let us now define 
   $$
    E_k:=\frac{C_k+D_k} N,
   $$
   where $N \in \Z^+$ is sufficiently large so that $$|E_k|< \frac{\lambda^{3/2} \log 2}{8 \Lambda^{1/2} n^{3/2}}$$ for $k=1,\ldots,n$. Let  $P_1>N$ be sufficiently large such  that if $P\geq P_1$ then
      \begin{gather}\label{uri}
   \max_{1\leq k \leq n} \abs{\sum_{p < P} \frac{\omega_0(p) c_k(p)} p -D_k}<\frac {\varepsilon} 2,  
           \end{gather}
   and such that for each $Q \geq P_1$ we can use Lemma \ref{lem5} with $\xi=1$  to define $|\omega(p)|=1$ for $Q\leq p < Q^2$ such that
   \begin{gather} \label{ure}
       \max_{1\leq k \leq n}\abs{\sum_{Q \leq p < Q^2}\frac{\omega(p) c_k(p)} p -E_k} <\frac{\varepsilon}{2N}.
   \end{gather}
   By defining $\omega(p)=\omega_0(p)$ for $p < P$ and by \eqref{ure} for $Q \leq p <Q^2$ for  $Q=P^{2^j}$ for each $j=0,\ldots,N-1$ then the conclusion of the lemma follows with $P_0=P_1^{2^N}$ by the inequalities \eqref{uri}, \eqref{ure} and the triangle inequality. 
    \end{proof}

\subsection{Proof of Lemma \ref{Le2}.}  
 
 We follow the proof of \cite[Lemma 2]{Andersson9}. 
 Choose\footnote{The convergence is well-known for ``almost all'' $\omega_0$ in a suitable sense. By Carleson's theorem we may even choose $\omega_0(p)=e^{2 \pi i p x}$ for almost all $0\leq x \leq 1$} $|\omega_0(p)|=1$ such that
 \begin{gather*}
   A_k(s)=\sum_p \frac{\omega_0(p) c_k(p)} {p^s}
 \end{gather*}  
  are convergent to analytic functions on the half plane $\Re(s)>1/2$ for each $k=1,\ldots,n$. Choose $P_0 \geq N$ and $\delta_1$ sufficiently small such that $1+\delta_1 K \subset \{s \in \C: \Re(s)>3/4\}$ and such that
  \begin{gather} \label{oo3}
     \sup_{P \geq P_0} \max_{0 \leq \delta \leq \delta_1} \max_{s \in K} \sum_{s \in K} \abs{\sum_{P \leq p} \frac{\omega_0(p) c_k(p)}{p^{1+\delta s}}}<\frac{\varepsilon} 9.
  \end{gather}
By Lemma \ref{lem6} we may find some $P_1\geq P_0$  and $|\omega(p)|=1$ for $p \leq P_1$ such that $\omega(p)=a_p$ for $p \leq N$ and
\begin{gather} \label{iia0}
\max_{1 \leq k \leq n}\abs{ 
\sum_{p < P_1}  \frac{\omega(p)c_k(p)}{ p} -C_k} <\frac{\varepsilon} 9. 
\end{gather}
Let us now choose $B>0$ such that
\begin{gather} \label{ia0}
\max_{1 \leq k \leq n} \max_{s \in K} \abs{\int_0^B g_k(x) e^{-sx} dx-f_k(s)}<\frac \varepsilon 9,
\end{gather}
and  $M \in \Z^+$  sufficiently large 
such that
\begin{gather}
 \label{in1}
 \max_{1 \leq k \leq n} \max_{\substack{|x-y| \leq B/M \\ 0  \leq x,y \leq B}} \max_{s \in K} \abs{e^{-sx} g_k(x)-e^{-sy}g_k(y)} <\frac{\varepsilon} {9B}.
 \end{gather}  
From  \eqref{in1} it follows that the Riemann integral may be estimated by a Riemann sum
\begin{gather} \label{ia1}
\max_{1 \leq k \leq n} \max_{s \in \C} \abs{\sum_{m=1}^M g\p{\frac{mB}M} e^{-smB/M} \frac B M   - \int_0^B g_k(x) e^{-sx} dx}<\frac \varepsilon 9.
  \end{gather}
For a given $\delta$, define 
\begin{gather} \label{Pdef}
 P_2:=\exp \p{\frac B {M \delta}}, \qquad P_3:=\exp\p{\frac {B(M+1)} {M \delta}}. 
\end{gather} 
Now assume that $0<\delta_0 \leq \delta_1$ is sufficiently small so that  if $0 < \delta \leq \delta_0$ then
 \begin{gather} \label{uio}
\max_{s \in K} \max_{1 \leq k \leq n} 
 \sum_{p < P_1}  \abs{\frac{\omega(p)c_k(p)} p   -  \frac{\omega(p)c_k(p)}{ p^{1+\delta s}}}  <\frac{\varepsilon} 9,
\end{gather}
and such that $P_2$ defined by \eqref{Pdef} is sufficiently large so that $P_2 \geq P_1$ and that we may for each $m=1,\ldots,M$ apply Lemma  \ref{lem5} with  $\xi=\frac 1 m$ and $P=P_2^m =\exp(mB/M \delta)$ so that
\begin{gather} \label{ia3}
  \max_{1\leq k \leq n} \abs{ \sum_{mB/M \leq   \delta \log p < (m+1)B/M}\frac{\omega(p) c_k(p)} p -g\p{\frac{mB}M} \frac B M} <\frac{\varepsilon}{9M},
\end{gather}
for some $|\omega(p)|=1$ defined for $mB/M \leq  \delta \log p<(m+1)B/M$. Also assume that $M$ is sufficiently large such that 
\begin{gather} \label{ia3a}
  \max_{s \in K}\max_{1 \leq k \leq n} \sum_{m=1}^M  \, \sum_{mB/M  \leq   \delta \log p < (m+1)B/M} \frac{\abs{c_k(p)}}  p \abs{p^{-\delta s} -e^{-mB/Ms}} <\frac{\varepsilon}{9},
\end{gather}
By   the inequalities  \eqref{ia0}, \eqref{ia1}, \eqref{ia3a} and \eqref{ia3} for each $m=1,\ldots,M$ and the triangle inequality it follows that
\begin{gather} \label{ia4}
         \max_{s \in K} \max_{1\leq k \leq n}\abs{\sum_{P_2 \leq  p < P_3}\frac{\omega(p)c_k(p)} {p^{1+\delta s}} -f_k(s)} <\frac{3\varepsilon}{9}.
\end{gather}
By defining  $\omega(p):=\omega_0(p)$ when $P_1<p\leq P_2$ and when $p>P_3$  it follows that 
\begin{gather} \label{ia5}
       \max_{s \in K} \max_{1\leq k \leq n}\abs{\sum_{P_1 \leq   p < P_2}\frac{\omega(p)c_k(p)} {p^{1+\delta s}}} <\frac{2\varepsilon}{9}, \qquad
  \max_{s \in K} \max_{1\leq k \leq n}\abs{\sum_{P_3 \leq   p}\frac{\omega(p)c_k(p)} {p^{1+\delta s}}} <\frac{\varepsilon}{9},
\end{gather}
         by applying the inequality \eqref{oo3}  twice (combined with the triangle inequality) and once respectively. The conclusion of our lemma follows by the inequalities \eqref{iia0},  \eqref{uio},  \eqref{ia4}, \eqref{ia5} and the triangle inequality.
\qed

\end{document}